\date{}
\def\l{\lambda}
\def\a{\alpha}
\def\b{\beta}
\def\e{\varepsilon}
\def\ncja{noncommutative Jordan algebra}
\def\0{\bar 0}
\def\1{\bar 1}
\def\M{\mathcal M}
\def\V{\mathcal V}
\def\ctd{\hfill$\Box$}
\def\OO{\mathbb O}
\def\bes{\begin{eqnarray*}}
\def\ees{\end{eqnarray*}}
\def\bee{\begin{eqnarray}}
\def\eee{\end{eqnarray}}
\def\Proof{{\it Proof. }}
\newtheorem{thm}{Theorem}[section]
\newtheorem{lem}{Lemma}[section]
\newtheorem{cor}{Corollary}[section]
\newtheorem{prop}{Proposition}[section]
\theoremstyle{remark}
\theoremstyle{example}
\newtheorem{example}{Example}[section]
\theoremstyle{question}
\newtheorem{question}{Question}[section]
\begin{document}

\title[]{Nonmatrix varieties of nonassociative algebras}
\author[I.\,P.\,Shestakov]{Ivan P.\,Shestakov}\address{Instituto de Matem\'atica e Estat\'istica,
Universidade de S\~ao Paulo,
S\~ao Paulo, Brasil
{\tiny and}
Sobolev Institute of Mathematics, Novosibirsk, Russia}
\email{shestak@ime.usp.br}
\author[V.\,Bittencourt]{Vinicius Souza Bittencourt}\address{
Universidade Federal do Oeste da Bahia, Barreiras, Bahia, Brasil
}
\email{vinicius.bittencourt@ufob.edu.br}

\thanks{The first author is partially supported by  FAPESP, Proc.\,  2018/23690-6 and CNPq, Proc.\,304313/2019-0, Brasil.}



  \vspace {5 mm}

\begin{abstract}
A variety of associative algebras is called  {\it nonmatrix } if it does not contain the algebra of  $2\times 2$ matrices over the given field.  Nonmatrix varieties were introduced and studied by V.\,N.\,Latyshev  \cite{Lat, Lat1, Lat2} in relation with the Specht problem. Some characterizations of nonmatrix varieties were obtained in the paper \cite{MPR}.
In the given paper the notion of nonmatrix variety is extended for nonassociative algebras, and the characterization from  \cite{MPR} is generalized for alternative, Jordan, and some other varieties of algebras.
\end{abstract}

\maketitle

\begin{flushright}
To the memory of Viktor Nikolaevich Latyshev 
\end{flushright}

\section{Introduction}
A variety of associative algebras is called  {\it nonmatrix} if it does not contain the algebra of  $2\times 2$ matrices over the given field.  
Nonmatrix varieties were introduced and studied by V.\,N.\,Latyshev \cite{Lat, Lat1, Lat2} in connection with the Specht problem. Some characterizations of nonmatrix varieties were obtained in the papers   \cite{Kem, MPR}. More exactly, the following result was proved in  \cite{MPR}.

Let $\V$ be a variety of associative algebras over  an infinite field. The following conditions are equivalent:
\begin{enumerate}
\item The variety $\V$ is nonmatrix;
\item every algebra $A\in\V$ satisfies an identity of the type  $[x,y]^m=0$;
\item  every finitely generated algebra $A\in\V$ satisfies an identity of the type  $[x_1,x_2]\cdots [x_{2s-1},x_{2s}]=0$;
\item let $A\in\V$, then for any nilpotent elements     $a,b\in A$ the element  $a+b$ is nilpotent as well;
\item let $A\in\V$, then the set of nilpotent elements forms an ideal in  $A$;
\item let $A\in\V$, then any nilpotent element in $A$ generates a nilpotent ideal in $A$;
\item let $A\in\V$, then every finite set of algebraic elements generates a finite dimensional subalgebra in  $A$.
\end{enumerate}

In this paper we extend the notion of nonmatrix variety to nonassociative algebras and generalize the above characterization to alternative, Jordan, and some other varieties. The most general varieties that have been considered here are the so called  {\em admissible} and {\em locally admissible} varieties of algebras, which were  introduced by the first author in  \cite{Sh1971}. A variety  $\V$ of noncommutative Jordan algebras is called {\em (locally) admissible}, if every anticommutative algebra from  $\V$ is (locally) nilpotent. In particular, the varieties of alternative and Jordan algebras are admissible.We prove some structure theorems for locally admissible varieties and characterize nonmatrix (locally) admissible varieties of algebras over a field of  chatracteristic 0.

\section{Alternative and Jordan algebras}
We will consider algebras over an infinite field  $F$. In particular, all the considered varieties are homogeneous. 

A variety $\V$ of alternative (Jordan) algebras we will call {\em nonmatrix}, if $\V$ does not contain the algebra of  $2\times 2$ matrices $M_2=M_2(F)$ over the field $F$ (respectively, the algebra of symmetric  $2\times 2$ matrices $H_2=H(M_2(F))$ over $F$).
\begin{prop}\label{prop2.1}
For any variety  $\V$ of alternative or Jordan algebras the following conditions are equivalent:
\begin{itemize}
\item [(A)] the variety $\V$ is nonmatrix;
\item  [(B)] every simple algebra  $A\in\V$  is a field;
\item [(C)] every prime algebra $A\in\V$ without nonzero nil ideals is associative and commutative.
\end{itemize}

\end{prop}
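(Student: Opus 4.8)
The plan is to prove the cyclic chain of implications $(A)\Rightarrow(C)\Rightarrow(B)\Rightarrow(A)$, using the classical structure theory of simple and prime alternative and Jordan algebras. The implication $(B)\Rightarrow(A)$ is the easy direction: since $M_2(F)$ (resp. $H_2(F)$) is a simple alternative (resp. Jordan) algebra that is not a field (it is noncommutative in the alternative case, and not a division algebra — it contains the idempotent $e_{11}$ and hence zero divisors — in the Jordan case), any variety satisfying $(B)$ cannot contain it, so $\V$ is nonmatrix.

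**The main work: $(A)\Rightarrow(C)$.** Let $A\in\V$ be prime with no nonzero nil ideals. For alternative algebras one invokes Kleinfeld's theorem together with the Slater/Kleinfeld prime-ring structure theory: a prime alternative ring that is not associative is (after possibly passing to a central localization or using the fact that it has no nil ideals) a Cayley--Dickson ring, hence its central closure is an octonion algebra over the extended centroid; but an octonion algebra contains a copy of $M_2$ of its coordinate field, which would then lie in $\V$ — contradiction. So $A$ is associative. Now if $A$ is associative, prime, and noncommutative, then $A$ satisfies no polynomial identity making it commutative, and by Posner's theorem (or directly by a standard argument using the absence of nil ideals together with primeness) one produces a copy of $M_2(K)$ for some field extension $K\supseteq F$ inside a ring of quotients of $A$; since $\V$ is a variety over an infinite field it is closed under scalar extension and under subalgebras of quotient rings in the relevant sense, forcing $M_2(F)\in\V$ — contradiction. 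Hence $A$ is associative and commutative. The Jordan case is parallel: a prime nondegenerate Jordan algebra is, by Zelmanov's structure theorem, either special (hence embeds in $A^{(+)}$ for $A$ associative prime, and one reduces to the associative case, $H_2(F)\subseteq M_2(F)^{(+)}$) or of Clifford type or an Albert ring; in each non-associative-commutative case one exhibits $H_2(F)$ inside it. Care is needed because the hypothesis is ``no nil ideals'' rather than full nondegeneracy, so the first sub-step is to argue that primeness plus no nil ideals yields the degree of nondegeneracy needed to apply Zelmanov's theorem (using that over an infinite field a homogeneous identity argument rules out absolute zero divisors in a prime algebra of the relevant varieties, or passing to $A/(\text{degenerate radical})$ and checking it is still in $\V$).

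**The implication $(C)\Rightarrow(B)$.** Let $A\in\V$ be simple. A simple algebra is prime, and a simple algebra has no proper ideals, in particular no nonzero nil ideal unless $A$ itself is nil — but a nil simple algebra with identity-like behaviour forces $A^2=0$ or $A=0$, which we exclude (a nonzero simple algebra satisfies $A^2=A\neq 0$). So $(C)$ applies: $A$ is associative and commutative. A simple associative commutative ring (with the convention that ``simple'' includes $A^2=A$) is a field. This step is essentially bookkeeping once $(C)$ is in hand.

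**Expected obstacle.** The genuine difficulty is the passage from the ring-theoretic hypothesis ``prime without nonzero nil ideals'' to a clean application of the deep structure theorems (Kleinfeld--Slater in the alternative case, Zelmanov in the Jordan case), since those theorems are usually stated under primeness plus nondegeneracy / semiprimeness hypotheses; I expect to spend most of the effort showing that, for algebras in an alternative or Jordan variety over an infinite field, primeness together with absence of nil ideals already precludes absolute zero divisors, so that the structure theorems apply verbatim and the offending $M_2$ or $H_2$ can be extracted.
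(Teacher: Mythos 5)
Your overall plan (the cycle $(A)\Rightarrow(C)\Rightarrow(B)\Rightarrow(A)$) differs from the paper's decomposition (two equivalences $(A)\Leftrightarrow(B)$ and $(B)\Leftrightarrow(C)$, with the simple case as the hub), and it leans on the same classical structure theorems. However, you have skipped the one preliminary step that makes those theorems applicable, and this creates a genuine gap. The paper begins by observing that under \emph{any} of the hypotheses (A), (B), (C) the variety $\V$ cannot contain the free associative algebra (alternative case) or the free special Jordan algebra (Jordan case), and hence every algebra in $\V$ is a $PI$-algebra. This is not a formality: Posner's theorem, which you invoke in $(A)\Rightarrow(C)$ to extract $M_2(K)$ from a prime noncommutative associative algebra, is a theorem about prime \emph{PI} rings and is simply false without that hypothesis (the free associative algebra is a prime noncommutative domain with no nil ideals and contains no copy of $M_2$). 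Once the $PI$ reduction is made, your concern about passing from ``prime with no nil ideals'' to nondegeneracy largely evaporates, because the Posner--Slater--Zelmanov results can be quoted in the form the paper uses: a prime nil-semisimple $PI$-algebra is a central order in a simple algebra.

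The second, sharper gap is in your $(C)\Rightarrow(B)$ step. To apply (C) to a simple algebra $A$ you must rule out that $A$ is nil (otherwise $A$ is itself a nonzero nil ideal and (C) says nothing). Your argument that a nil simple algebra ``forces $A^2=0$ or $A=0$'' is not valid: nilness of $A$ is perfectly compatible with $A^2=A$, and indeed simple nil associative rings exist (Smoktunowicz), so no elementary bookkeeping can exclude them. What excludes them here is again the $PI$ property: a nil alternative or Jordan $PI$-algebra is locally nilpotent (Shirshov, Zelmanov), and a simple algebra cannot be locally nilpotent. This is exactly the content of the paper's appeal to the Kaplansky--Kleinfeld--Zelmanov theorems in its $(C)\Rightarrow(B)$ step, and it is the part of your proposal that, as written, would fail.
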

\Proof It is clear that in any of the cases  $(A) - (C)$ the variety  $\V$ does not contain free associative algebra in the alternative case and does not contain  free special Jordan algebra in the Jordan case. Therefore, in all the cases every algebra from   $\V$ is a  $PI$-algebra. Since the variety  $\V$ is homogeneous, without loss of generality we may assume that the field  $F$ is algebraically closed.

$(A)\Rightarrow (B)$.
In the case of alternative algebras, by Kaplansky's and Kleinfed's theorems   \cite{ZSSS}, every simple algebra $A\in\V$ is a matrix algebra of order  $n$  or a split Cayley-Dickson algebra  $\OO$ over $F$. Since  $\V$ is nonmatrix and  $M_2\subset \OO$, the only possible case here is  $A=F$.

In the case of Jordan algebras, by Zelmanov's theorem  \cite{Zel}, every simple algebra $J\in\V$ is either matrix algebra $M_n^{(+)}$,  or the algebra of symmetric matrices $H(M_n)$, or the exceptional algebra $H(\OO_3)$ of $3\times 3$ hermitian matrices over the split Cayley-Dickson algebra $\OO$,  or the algebra of bilinear form $J(V,f)$. It is easy to see that all these algebras contain the algebra $H_2$, except the algebra $M_1(F)^{(+)}=F$. Since  $\V$ is nonmatrix, we have $J=F$.
 
 \smallskip
 $(B)\Rightarrow (A)$. It is evident.
 
  \smallskip
$(B)\Rightarrow (C)$. 
By the theorems of Posner, Slater \cite{ZSSS} and Zelmanov  \cite{Zel}, every prime  nil-semisimple  $PI$-algebra $A$ is a central order in a simple algebra, which by (B)  is a field. Hence $A$ is associative and commutative.

 \smallskip
$(C)\Rightarrow (B)$. 
By the theorems of Kaplansky, Kleinfeld \cite{ZSSS} and Zelmanov  \cite{Zel}, a simple  $PI$-algebra from  $\V$ can not be a nil algebra, therefore it is associative and commutative and is a field. 

\ctd

 \begin{thm}\label{thm2.1}
1.  Let $\V$ be a variety of alternative algebras over an infinite field. Then conditions  (1) - (7) from the Introduction are equivalent, where in condition  (3) the arrangements of parenthesis in the product is arbitrary.

2. Let $\V$ be  a variety of Jordan algebras over an infinite field. Then conditions  (1), (4) - (7)  and the conditions (2'), (3') below are equivalent.

\begin{enumerate}
\item [(2')] every algebra $A\in\V$ satisfies an identity of the type   $(x,y,z)^m=0$;
\item  [(3')] every finitely generated algebra $A\in\V$ satisfies an identity of the type    $(x_1,x_2,x_3)\cdots (x_{3s-2},x_{3s-1},x_{3s})=0$, where the arrangements of parenthesis in the product is arbitrary.
\end{enumerate}
\end{thm}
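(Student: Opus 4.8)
The proof rests on Proposition~\ref{prop2.1} together with the structure theory of alternative and Jordan PI-algebras, and I would organize it as a cycle of implications through condition~(1). As in the proof of Proposition~\ref{prop2.1}, each of the listed conditions forces every algebra of $\V$ to be a PI-algebra, and since $\V$ is homogeneous over an infinite field we may extend scalars and assume $F$ algebraically closed. The key structural consequence of Proposition~\ref{prop2.1}(C) I would isolate is: for every $A\in\V$ the quotient $A/N(A)$ by the nil radical is commutative, associative and reduced (has no nonzero nilpotent elements). Indeed $A/N(A)$ is a semiprime PI-algebra, hence a subdirect product of prime PI-algebras without nonzero nil ideals, each of which is a field by~(C), and a commutative associative algebra with zero nil radical is reduced. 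In particular $\mathrm{Nil}(A)=N(A)$ is an ideal, which immediately yields (1)$\Rightarrow$(5)$\Rightarrow$(4).

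For the identities I would work in relatively free algebras. For (1)$\Rightarrow$(2) (resp. (1)$\Rightarrow$(2')) pass to the relatively free algebra $A_0$ of $\V$ on $x,y$ (resp. on $x,y,z$): since $A_0/N(A_0)$ is commutative associative, $[x,y]\in N(A_0)$ (resp. $(x,y,z)\in N(A_0)$) is nilpotent in $A_0$, and as every substitution factors through $A_0$ this is the required identity. For (2)$\Rightarrow$(3) (resp. (2')$\Rightarrow$(3')): in a finitely generated $A\in\V$ all commutators (resp. associators) of elements lie in $N(A)$, and the nil radical of a finitely generated alternative (resp. Jordan) PI-algebra is nilpotent, so a product of sufficiently many commutators (resp. associators) lies in $N(A)^{s}=0$. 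The implications (2)$\Rightarrow$(1), (3)$\Rightarrow$(1) (and their primed Jordan analogues) and (4)$\Rightarrow$(1) I would all close by checking that the relevant property fails in $M_2$ (resp. $H_2$): in $M_2$ one has $[e_{12},e_{21}]=e_{11}-e_{22}$ with $(e_{11}-e_{22})^2=e_{11}+e_{22}$, so no power and no product of copies of this commutator vanishes, while $e_{12}$ and $e_{21}$ are nilpotent but $e_{12}+e_{21}$ is not; and in $H_2$ one computes $(e_{11},e_{22},e_{12}+e_{21})=-\tfrac14(e_{12}+e_{21})$ with $(e_{12}+e_{21})^{2}=\mathbf 1$, and checks that $H_2$ contains two nilpotent elements with non-nilpotent sum. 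So any variety with one of these properties omits $M_2$ (resp. $H_2$) and is nonmatrix.

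It remains to handle (6) and (7), which I would reduce to finitely generated subalgebras. For (2)$\Rightarrow$(6) (resp. (2')$\Rightarrow$(6)): if $a\in A$ is nilpotent of index $k$, then each product of elements of the ideal $I_a$ generated by $a$ lies in some finitely generated subalgebra $A'=\langle a,c_1,\dots,c_r\rangle$, and $A'/N(A')$ being commutative associative gives $(I'_a)^{k}\subseteq N(A')$; to upgrade this to a single bound $I_a^{N}=0$ one performs the familiar ``collecting'' computation in an algebra satisfying $[x,y]^m=0$ (resp. $(x,y,z)^m=0$), moving copies of $a$ together at the cost of commutators (resp. associators) which are themselves nilpotent of bounded index, until $k$ copies of $a$ become adjacent; this produces $N=N(k,m)$ independent of $A$. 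Then (6)$\Rightarrow$(5) since the set of nilpotents is the sum of the nilpotent ideals generated by its elements, and the cycle for (1)--(6) closes. For (1)$\Rightarrow$(7): a finitely generated $B\in\V$ generated by algebraic elements has $B/D$ commutative associative, where $D$ is the nilpotent, finitely generated, ideal of commutators (resp. associators); $B/D$ is generated by finitely many algebraic elements, hence finite dimensional, and therefore so is $B$. For (7)$\Rightarrow$(1): if $M_2$ (resp. $H_2$) lay in $\V$, then by scalar extension $\V$ would contain the subalgebra of $M_2(F[t])$ (resp. of $H_2(F[t])$) generated by two nilpotent elements of index two -- an infinite dimensional algebra already visible through the powers $\mathrm{diag}(t^{n},0)$ (resp. $t^{n}\mathbf 1$) -- contradicting~(7).

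\emph{The main obstacle} is the step (2)$\Rightarrow$(6) (resp. (2')$\Rightarrow$(6)): the bare reduction to finitely generated subalgebras only shows $I_a$ is locally nilpotent, and the nilpotency index of $N(A')$ grows with the number of generators of $A'$, so one genuinely needs the Engel-type identity and an explicit collecting argument -- which in the alternative, and especially the Jordan, case must be carried out carefully because of the non-associative identities involved -- to bound the nilpotency index of $I_a$ in terms of $k$ and $m$ alone. A secondary point requiring attention is that the structural reduction ``$A/N(A)$ is commutative associative'' and the passage to $\bar F$ behave well for the non-identical conditions (4)--(7).
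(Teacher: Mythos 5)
Your handling of the identity conditions follows the paper's own route: Proposition \ref{prop2.1}(C), the subdirect decomposition of $A/\mathrm{Nil}\,A$ into nil-semisimple prime algebras (which is where ``commutators and associators lie in $\mathrm{Nil}\,A$'' really comes from -- note this uses (1), not the bare identity $[x,y]^m=0$, so your arrow (2)$\Rightarrow$(3) is really (2)$\Rightarrow$(1)$\Rightarrow$(3)), and the Shestakov--Medvedev nilpotency of the nil radical of a finitely generated PI-algebra. Your $M_2$ and $H_2$ computations for (2),(3),(4)$\Rightarrow$(1) are correct, provided that for (4) you first pass to $H_2$ over a quadratic extension containing $\sqrt{-1}$ (which lies in $\V$ by homogeneity over an infinite field), since $H_2(F)$ itself may contain no nonzero nilpotents. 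Where you genuinely diverge from the paper is in (4)--(7): the paper proves nothing about them in Section 2 and defers the whole block to Theorem \ref{thm3.5}, where they are treated with the machinery of locally admissible varieties (strongly prime algebras, the locally finite radical, quadratic flexible algebras). Your direct arguments for (1)$\Rightarrow$(5)$\Rightarrow$(4), (6)$\Rightarrow$(5), (1)$\Rightarrow$(7) and (7)$\Rightarrow$(1) are workable and close the same cycle more economically.

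The genuine gap is exactly the step you flag yourself: (2)$\Rightarrow$(6) with ``nilpotent ideal''. The reduction to finitely generated subalgebras yields only local nilpotency of the ideal $I_a$, because the nilpotency index of $\mathrm{Nil}(A')$ supplied by the Shestakov--Medvedev theorem grows with the number of generators of $A'$; and the ``collecting'' computation that is supposed to produce a uniform bound $N(k,m)$ is only asserted, not carried out. In the alternative and especially the Jordan case there is no associativity to move factors past one another, the error terms produced by collecting are not themselves commutators (resp.\ associators) of elements, so the identities of (2), (2$'$) do not apply to them directly, and the combinatorial argument of \cite{MPR} does not transfer automatically. Observe that the paper quietly sidesteps this: in Theorem \ref{thm3.5} the corresponding condition is weakened to ``generates a \emph{nil} ideal'', which is immediate once $\mathrm{Nil}\,A$ is identified with the set of nilpotent elements, and this weaker form already closes your cycle via (6)$\Rightarrow$(5)$\Rightarrow$(4)$\Rightarrow$(1). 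So either weaken ``nilpotent'' to ``nil'' in condition (6), or restrict it to finitely generated algebras (where $\mathrm{Nil}\,A$ is itself nilpotent); as written, your proof of the literal condition (6) of the Introduction is incomplete.
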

\Proof
$(1)\Rightarrow (2), (2')$.  Let $\V$ be a nonmatrix variety of alternative or Jordan algebras, and let $A$ be the  $\V$-free algebra on generators $x,y,z$. Due to \cite[���. 67]{ZS}, the quotient algebra $A/Nil\,A$ is isomorphic to a subdirect sum of nil-semisimple prime algebras.  
By condition (C) of proposition \ref{prop2.1}, every nil-semisimple prime algebra from $\V$ is associative and commutative, hence so is the quotient algebra $A/Nil\,A$. Therefore, $[x,y],\,(x,y,z)\in Nil\,A$.  SInce $A$ is a free algebra, this proves  (2) and (2').

\smallskip
$(1)\Rightarrow (3), (3')$. Let $A$ be a finitely generated algebra from a nonmatrix variety $\V$ of alternative or Jordan algebras. As above, the elements $[a,b],\,(a,b,c)\in Nil\,A$ for any $a,b,c\in A$. Since $A$ is a $PI$-algebra, by the theorems of Shestakov \cite{Sh1983} and Medvedev \cite{Med}, the radical $Nil\,A$ is nilpotent. This proves (3) and (3').

\smallskip
$(3)\Rightarrow (2), (3')\Rightarrow (2')$. It is evident.

\smallskip
$(2)\Rightarrow (1), (2')\Rightarrow (1)$.
It is easy to see that the matrix algebra $M_2$ does not satisfy the identity $[x,y]^n=0$, hence this identity defines a nonmatrix variety. Similarly, the algebra $H_2$ does not satisfy the identity $(x,y,z)^m=0$, hence this identity defines a nonmatrix variety of Jordan algebras.

\smallskip
The equivalence of conditions  (1) and (4) - (7) will be proved in theorem \ref{thm3.5}  in a more general situation, hence we omit these proofs here.

\ctd

\section{Noncommutative Jordan algebras}

Recall that an algebra $A$ is called a {\em noncommutative Jordan algebra} if it satisfies the identities
\bee
(x,y,x)&=&0,\label{flex}\\
(x^2,y,x)&=&0.\label{jord}
\eee
Identity \eqref{flex} is called  a {\em flexibility} identity. It is clear that any commutative or anticommutative algebra satisfies this identity. A flexible algebra  $A$  satisfies Jordan identity \eqref{jord} if and only if the adjoint commutative algebra  $A^{(+)}$ is a Jordan algebra. Notice that in unital algebras the Jordan identity  \eqref{jord} implies the flexibility identity \eqref{flex}.

The class of noncommutative Jordan algebras is extremally large. It contains all Jordan and alternative algebras, as well as all anticommutative algebras. Let us give more examples of noncommutative Jordan algebras.

\smallskip
\begin{example} Quadratic flexible algebras.
\end{example}
A unital algebra $A$ over a field $F$ is called {\em quadratic over  $F$}, if for any element  $a\in A$  there exist elements  $t(a),n(a)\in F$ such that 
\bes
a^2-t(a)a+n(a)1=0.
\ees
It is easy to see that any quadratic flexible algebra satisfies Jordan identity  \eqref{jord}, hence it is a noncommutative Jordan algebra.  An important class of such algebras is formed by  simple central algebras of dimension  $2^n$, obtained by the Cayley-Dickson process \cite[Chapter 2]{ZSSS}. For any  quadratic simple central flexible algebra  $A$ of dimension more then 2 the adjoint symmetric algebra $A^{(+)}$ is a simple Jordan algebra of bilinear form \cite{Smith}.

\begin{example} Quasi-associative algebras.
\end{example}
An algebra $A$ over a field  $F$ is called {\em quasi-associative}, if there exists a quadratic extension  $K$ of the field  $F$ such that the algebra $A_K=K\otimes_FA$ is isomorphic to the algebra  $B(\l)$, defined as follows:
$ B$ is an associative algebra over  $K$, $\l\neq\tfrac12$ is a fixed element of the field  $K$, and  $B(\l)$ has the same vector space over  $K$ as $B$, but with the product  $x*y = \l xy + (1-\l)yx$, where $xy$ is a product in  $B$.

Every quasi-associative algebra is a noncommutative Jordan algebra. Moreover, it is proved in \cite{Ded} that an algebra $A$ is quasi-associative if and onlyb if it satisfies the identity 
\bes\label{Id_qass}
(x,y,z)=\a[[x,z],y],
\ees
for some $\a\in F,\,\a\neq 1/4$.

\begin{example}
 Kokoris algebras \cite{Kok58}.
 \end{example}
Let $A$ be an algebra. Recall (see \cite[Lemma 3.1]{ZSSS}) that the associator $(a,b,c)_+$  
in the algebra $A^{(+)}$ is represented via operations in  $A$ as follows:
\bee\label{id_as+}
4(a, b, c)_+ &=& (a, b, c) - (c, b, a) + (b, a, c) + (a, c, b)\\
&-& (c, a, b) - (b, c, a) + [b, [a, c ] ].\nonumber
\eee
If  $A$ is flexible then the right part of this equality may be written as
 $$
 4(a, b, c) - J(a, b, c)-[[a, c],b ],
$$ 
where
\bes
J(a, b, c)&=&(a,b,c)+(b,c,a)+(c,a,b)-(b,a,c)-(a,c,b)-(c,a,b)\\
&=&[[a,b],c]+[[b,c],a]+[[c,a],b].
\ees
Therefore, the following proposition holds:
\begin{prop}\label{prop3.0}
For a flexible algebra $A$, the algebra $A^{(+)}$ is associative if and only if   $A$ satisfies the identity
\bee\label{id_Kokoris}
J(a, b, c)= 4(a, b, c) -[[a, c],b ].
\eee
\end{prop}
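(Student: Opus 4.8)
The plan is to derive \eqref{id_Kokoris} directly from identity \eqref{id_as+} by specializing to the flexible case and then observing that the associativity of $A^{(+)}$ means precisely that the left-hand side of \eqref{id_as+} vanishes identically. Starting from \eqref{id_as+}, I would first rewrite the six associator terms $(a,b,c) - (c,b,a) + (b,a,c) + (a,c,b) - (c,a,b) - (b,c,a)$ on the right. In a flexible algebra the associator is alternating under the transposition of its outer arguments, i.e. $(x,y,z) = -(z,y,x)$, which follows by linearizing \eqref{flex}. Applying this, the pair $-(c,b,a)$ equals $+(a,b,c)$, so those two terms combine to $2(a,b,c)$; similarly I would pair up the remaining four terms using flexibility to collapse the whole alternating sum into a combination of $(a,b,c)$ and the Jordan-type cyclic sum $J(a,b,c)$. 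This is the computation the excerpt already summarizes: the right side of \eqref{id_as+} becomes $\tfrac14\bigl(4(a,b,c) - J(a,b,c) - [[a,c],b]\bigr)$ (after incorporating the extra $[b,[a,c]]$ term, noting $[b,[a,c]] = -[[a,c],b]$).

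Next, the key conceptual step: $A^{(+)}$ is associative if and only if $(a,b,c)_+ = 0$ for all $a,b,c \in A$, by the very definition of the associator in $A^{(+)}$. Feeding $(a,b,c)_+ = 0$ into the simplified form of \eqref{id_as+} gives $0 = 4(a,b,c) - J(a,b,c) - [[a,c],b]$, which rearranges exactly to \eqref{id_Kokoris}. For the converse, if $A$ is flexible and satisfies \eqref{id_Kokoris}, then running the same computation backwards shows $4(a,b,c)_+ = 0$, and since $\cha F \neq 2$ (the field is infinite, but one should note char $\neq 2, 3$ is what is actually needed to divide by $4$) this forces $(a,b,c)_+ = 0$, so $A^{(+)}$ is associative.

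The main obstacle — really the only subtlety — is bookkeeping in the sign-collapse of the six-term alternating associator sum, together with a careful statement of the characteristic hypothesis. The factor $4$ in \eqref{id_as+} means the equivalence "$(a,b,c)_+=0 \iff 4(a,b,c)_+=0$" requires $\cha F \neq 2$; since the paper works over an infinite field and the identity \eqref{id_as+} from \cite{ZSSS} is quoted as stated, I would simply invoke \eqref{id_as+} as given and note that the coefficient $4$ is a unit. One should also double-check the definition of $J$ as printed in the excerpt — the displayed formula for $J(a,b,c)$ has a typographical repetition ($-(c,a,b)$ appears where $-(b,c,a)$ or $-(c,b,a)$ is presumably intended) — but the second expression $J(a,b,c) = [[a,b],c] + [[b,c],a] + [[c,a],b]$ is the one I would use throughout, and the identity \eqref{id_Kokoris} is stated in terms of that $J$. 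So the proof is essentially three lines: apply \eqref{id_as+}, use flexibility to simplify, and translate "associative $A^{(+)}$" as "$(a,b,c)_+ = 0$".
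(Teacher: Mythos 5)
Your proposal is correct and follows exactly the route the paper itself takes: it rewrites the right-hand side of \eqref{id_as+} using flexibility as $4(a,b,c)-J(a,b,c)-[[a,c],b]$ and then reads off the equivalence from the vanishing of $(a,b,c)_+$. Your remarks on the typo in the first displayed expression for $J$ and on the need for the coefficient $4$ to be invertible are both accurate but do not change the argument.
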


 We will call an algebra $A$ {\em a Kokoris algebra}, if it satisfies identity \eqref{id_Kokoris}. Clearly, a Kokoris algebra is flexible. Moreover, since the adjoint commutative algebra  $A^{(+)}$  is associative for a Kokoris algebra $A$, the algebra $A$ is also noncommutative Jordan.  
 
Observe that, for a Kokoris algebra  $A$, the algebra  $A^{(+)}$  with the bracket $\{x,y\}=\tfrac12[x,y]=\tfrac12(xy-yx)$ is a {\em generis Poisson algebra}. Recall (see \cite{Sh1998, KMLS}),
 that an algebra $A$ with the multiplications  $\cdot,\,\{,\}$ is called a  {\em generic Poisson algebra} if
\begin{itemize}
\item the multiplication $\cdot$ is associative and commutative;
\item the bracket $\{,\}$ is anticommutative;
\item  the Leibniz identity  $\{x\cdot y,z\}=x\cdot\{y,z\}+y\cdot\{x,z\}$ holds.
\end{itemize}
Recipricaly, if  $\langle A,\cdot,\{,\},+\rangle$ is a generic Poisson algebra then the algebra $\langle A,*,+\rangle$ with the multiplication  
$a*b=a\cdot b+\{a,b\}$ is a Kokoris algebra. 

Classical Poisson algebras relate, in this correspondence, to  {\em Lie admissible} Kokoris algebras, that is, the algebras satisfying the identity $4(x,y,z)=[[x,z],y]$.

L.\,Kokoris in \cite{Kok58} defined these algebras in the case when   $A=F[X]$ is the polynomial algebra on a set of variables    $X =\{x_j\,|\,j\in\Lambda\}$, with the bracket
 \bes
\{f,g\} = \sum_{i<j} c_{ij}(\tfrac{\partial f}{\partial x_i}\tfrac{\partial g}{\partial x_j}-\tfrac{\partial f}{\partial x_j}\tfrac{\partial g}{\partial x_i}),
\ees
where $\{c_{ij}\,|\,i,j\in\Lambda\}$ is a family of elements from $F[X]$ such that $c_{ij} =-c_{ji}$  for all $i, j\in\Lambda$. 

\smallskip

We will need the following fundamental result by V.\,G.\,Skosyrsky \cite{Skos,  Skos1} on description of strongly prime noncommutative Jordan algebras. Recall that a prime noncommutative Jordan algebra  $A$ is called  {\em strongly prime} if it does not contain nonzero ideals  $I$ such that   $I^{(+)}$ is contained in the McCrimmon radical  ${\mathcal M}(A^{(+)})$ \cite[14.1]{ZSSS} of the adjoint Jordan algebra  $A^{(+)}$. In particular, if  $A$ is prime and  $Nil\,A=0$ then  $A$ is strongly prime. 

\begin{thm}\label{thm3.0} (\cite{Skos})
Let $A$ be a strongly prime   \ncja, then one of the following conditions holds: 
	\begin{itemize}
		\item[(I)] $A$ is a commutative prime nondegenerate Jordan algebra.
		
		\item[(II)] $A$ is a central order in a prime quasi-associative algebra over the extended centroid of the algebra $A$.
		
		\item[(III)] $A$ is a central order in a simple quadratic flexible algebra.
		
		\item[(IV)] $A$ is a Kokoris algebra.
		
	\end{itemize}
\end{thm}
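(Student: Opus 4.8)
The plan is to derive the four-fold classification from Zelmanov's structure theorem for strongly prime Jordan algebras, so the first step is to pass from $A$ to its adjoint Jordan algebra $A^{(+)}$. I would show that strong primeness of the noncommutative Jordan algebra $A$ forces $A^{(+)}$ to be a nondegenerate prime Jordan algebra. Nondegeneracy is essentially built into the hypothesis: if $A^{(+)}$ had a nonzero McCrimmon radical $\mathcal M(A^{(+)})$, one would extract from it a nonzero ideal $I\idl A$ with $I^{(+)}\subseteq \mathcal M(A^{(+)})$, contradicting the definition of strong primeness. For primeness of $A^{(+)}$ I would use flexibility in operator form: writing $R_x = R^{(+)}_x - \tfrac12 D_x$ and $L_x = R^{(+)}_x + \tfrac12 D_x$ with $D_x\colon y\mapsto [x,y]$, the defining identities \eqref{flex} and \eqref{jord} translate into $[R^{(+)}_x, D_x]=0$ together with a second relation tying $D$ to the Jordan multiplication operators; these let one check that an ideal of the Jordan algebra $A^{(+)}$, together with its $D$-closure, is an ideal of $A$ small enough to contradict primeness of $A$.

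Next I would feed $A^{(+)}$ into Zelmanov's theorem: it is a form of the $27$-dimensional Albert algebra, of special Hermitian type (a central order in a simple special Jordan algebra with Jordan-dense associative envelope), or a Jordan algebra $J(V,f)$ of a symmetric bilinear form. The remaining task is to recover the full, possibly noncommutative, product on $A$ from $A^{(+)}$ and the bracket, and to recognise its shape. If the bracket vanishes then $A=A^{(+)}$ and we are in case (I). In the Clifford case $J(V,f)$, the operator constraints above should force $D$ to act like imaginary-unit multiplication in a Cayley--Dickson construction, making $A$ quadratic over its extended centroid and flexible, i.e.\ case (III). In the special Hermitian case, $D_x$, commuting with $R^{(+)}_x$, should be realizable through left and right multiplications inside the associative envelope, and flexibility then pins the product down to $\lambda xy + (1-\lambda)yx$ for a scalar $\lambda$ in a quadratic extension of the centroid, which is case (II). Whenever this analysis degenerates --- in particular when $A^{(+)}$ is associative, equivalently when $A$ satisfies $J(a,b,c)=4(a,b,c)-[[a,c],b]$ by Proposition \ref{prop3.0} --- one lands in case (IV), Kokoris algebras; and the Albert case should be shown to carry no nonzero compatible bracket, so it too collapses to case (I).

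The hard part will be the special Hermitian case: converting the abstract data ``$D_x$ commutes with $R^{(+)}_x$ and satisfies the second flexibility relation'' into the concrete conclusion ``$A$ is a central order in some $B(\lambda)$'' requires working inside the associative enveloping algebra, controlling the extended centroid, and extracting the single scalar $\lambda$; nondegeneracy is exactly the rigidity one needs here, and getting the $\lambda=\tfrac12$ boundary right is precisely what separates case (II) from the purely commutative case (I). A smaller but still non-routine point is ruling out a genuinely noncommutative Albert-type algebra, which rests on the absence of a suitable module of compatible biadditive structures on $H(\OO_3)$.
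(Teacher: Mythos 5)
The paper does not actually prove Theorem \ref{thm3.0}; it is quoted from Skosyrsky \cite{Skos, Skos1} as a known structure theorem, so there is no in-paper argument to compare yours against. Judging your sketch on its own terms, its very first reduction is false, and false in a way that erases precisely the case the theorem is careful to keep. You claim that strong primeness of $A$ forces $A^{(+)}$ to be prime and nondegenerate. But the definition only forbids nonzero ideals $I$ \emph{of $A$} with $I^{(+)}\subseteq\mathcal{M}(A^{(+)})$; it does not give $\mathcal{M}(A^{(+)})=0$, because $\mathcal{M}(A^{(+)})$ need not be (or contain) an ideal of $A$ --- the bracket need not preserve it. The simple nodal noncommutative Jordan algebras of Kokoris are the standard counterexample: there $A^{(+)}\cong F[x_1,\dots,x_n]/(x_1^p,\dots,x_n^p)$ in characteristic $p$, which is neither prime nor nondegenerate as a Jordan algebra, yet $A$ is simple and hence strongly prime. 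So the plan of feeding $A^{(+)}$ into Zelmanov's theorem cannot even get started in case (IV); one must first establish the dichotomy ``either $\mathcal{M}(A^{(+)})$ produces a nonzero ideal of $A$ inside itself (excluded by strong primeness), or the interaction of the bracket with $\mathcal{M}(A^{(+)})$ forces $A^{(+)}$ to be associative,'' and that dichotomy is a substantial theorem of Skosyrsky, not a one-line consequence of the definition. Note that the paper's Theorem \ref{thm3.4} needs characteristic $0$ exactly to make $Nil(A^{(+)})$ derivation-invariant and hence an ideal of $A$; in positive characteristic this fails and case (IV) is genuinely present.

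Even after that is repaired, the remaining steps are statements of intent rather than arguments. That in the Hermitian case the relations $[R^{(+)}_x,D_x]=0$ plus the Jordan identity ``pin the product down to $\lambda xy+(1-\lambda)yx$'' over a quadratic extension of the extended centroid is the technical core of Skosyrsky's thirty-page paper (one must construct the extended centroid, work inside the associative envelope, and actually produce the scalar $\lambda$); the exclusion of a noncommutative bracket on an Albert form and the identification of the Clifford case with quadratic flexible algebras are likewise carried by ``should.'' As a roadmap your outline is broadly consistent with how the classification is organized, but it is not a proof, and its opening reduction is incorrect as stated.
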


As we have already seen, the class of noncommutative Jordan algebras is too large.  To obtain meaningful structural results in this class, additional restrictions are needed. 

 \smallskip
A variety $\V$ of noncommutative Jordan algebras we will call {\em (locally) admissible,} if every anticommutative algebra $A\in\V$ is  (locally) nilpotent. This condition is a very particular case of the following result valid in alternative and Jordan algebras: every nil algebra of bounded degree is locally nilpotent.

Consider in a noncommutative Jordan algebra $A$ the subspace  $I_2(A)=(A^{(+)})^2$.  It was proved in  \cite{Sh1971} that  $I_2(A)$ is an ideal in  $A$, and the quotient algebra $A/I_2(A)$ is anticommutative.  Therefore, a variety  $V$ is admissible  (locally admissible), if there exists a natural number  $n$ (respectively, $n(k)$) such that the following inclusions are true in the free algebra of countable rank  $F_{\V}[X]$ (respectively, in the free algebra $F_{\V}[X_k]$ of rank $k$):
\bee
(F_{\V}[X])^n&\subseteq &I_2(F_{\V}[X]),\label{n} \\
(F_{\V}[X_k])^{n(k)}&\subseteq &I_2(F_{\V}[X_k]). \label{n(k)}
\eee
The corresponding numbers $n$ and  $n(k)$ we will call {\em indices of admissibility and of local admissibility } of the variety $\V$.
For example, the varieties of Jordan, associative, and alternative algebras are admissible of indices respectively  2, 3, and  4 (see \cite{ZSSS}).
In  \cite{Sh1971} it was proved that the variety of noncommutative Jordan algebras satisfying the identity  $[x,(z,y,y)]=0$  is admissible of index  7. In the same paper it was proved that he variety of noncommutative Jordan algebras defined by  the identity $([x,y],z,z)=0$ is locally admissible. It is easy to see that an anticommutative algebra with the identity  $((xy)z)z=0$  is solvable but not necessary nilpotent, hence the last variety is not admissible.  

\begin{prop}\label{prop3.01}
A variety  $\V$ of noncommutative Jordan algebras is admissible of index   $m$ if and only if a system of multilinear identities of the following type verifies in  $\V$ 
\bee\label{m-lie}
w(x_1,x_2,\ldots,x_m)=\sum \a_i u_i\circ v_i,
\eee
for all ``purely Lie'' products $w$ of degree $m$ and some $\a_i\in F$  and monomials $u_i,v_i $ depending on $w$.
\end{prop}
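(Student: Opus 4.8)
The plan is to reformulate admissibility of index $m$ as a statement about the largest anticommutative quotient of $F_{\V}[X]$, and then to use the elementary fact that in an anticommutative algebra the product coincides, up to the factor $\tfrac12$, with the commutator. Put $\bar F=F_{\V}[X]/I_2(F_{\V}[X])$. By the result of \cite{Sh1971} quoted above, $I_2(F_{\V}[X])$ is an ideal and $\bar F$ is anticommutative; moreover $I_2(F_{\V}[X])$ is a fully invariant ideal (it is the verbal ideal of the identity $x\circ y=0$), so $\bar F$ is a relatively free algebra on a countably infinite set of free generators, namely the relatively free algebra of the subvariety of $\V$ defined by $x\circ y=0$. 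Consequently $\V$ is admissible of index $m$, i.e. $(F_{\V}[X])^m\subseteq I_2(F_{\V}[X])$, if and only if $\bar F^m=0$; and since $\bar F^m$ is spanned by the values $M(c_1,\ldots,c_m)$, $c_i\in\bar F$, of the multilinear monomials $M$ of degree $m$, and an identity holds in a relatively free algebra iff it holds on the free generators, this is in turn equivalent to: every multilinear monomial $M(x_1,\ldots,x_m)$ of degree $m$ lies in $I_2(F_{\V}[X])$.

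The key step is to pass from monomials to purely Lie products. In the anticommutative algebra $\bar F$ one has $ab+ba=0$, hence $[a,b]=ab-ba=2ab$ for all $a,b\in\bar F$. Therefore, if $M=M(x_1,\ldots,x_m)$ is a multilinear monomial of degree $m$ and $w=w(x_1,\ldots,x_m)$ is the purely Lie product obtained from $M$ by replacing every multiplication by the corresponding commutator, then $w\equiv 2^{m-1}M\pmod{I_2(F_{\V}[X])}$. Since a monomial is nothing but a bracketing of its factors, the assignment $M\mapsto w$ is a bijection between multilinear monomials of degree $m$ and multilinear purely Lie products of degree $m$. Hence the condition just obtained is equivalent to: every purely Lie product $w$ of degree $m$ lies in $I_2(F_{\V}[X])$. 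This settles one implication: if the identities \eqref{m-lie} hold in $\V$, then each purely Lie product $w$ of degree $m$ lies in $I_2(F_{\V}[X])$, because the right-hand side of \eqref{m-lie} plainly does, and hence $\V$ is admissible of index $m$.

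For the converse I must turn the membership ``$w\in I_2(F_{\V}[X])$'' into an identity of the shape \eqref{m-lie}. By definition $I_2(F_{\V}[X])=(F_{\V}[X]^{(+)})^2$ is the linear span of the elements $a\circ b=ab+ba$ with $a,b\in F_{\V}[X]$, so the membership means $w=\sum_i\alpha_i\,a_i\circ b_i$ for some $\alpha_i\in F$ and $a_i,b_i\in F_{\V}[X]$. Since this is an identity of $\V$ and $w$ is multilinear in $x_1,\ldots,x_m$, I may first substitute $0$ for all variables other than $x_1,\ldots,x_m$, and then, using that $F$ is infinite and $\V$ is homogeneous, pass to the multihomogeneous component of multidegree $(1,\ldots,1)$; after this only terms survive in which $a_i$ is a linear combination of multilinear monomials in a subset $S_i\subseteq\{x_1,\ldots,x_m\}$ and $b_i$ of multilinear monomials in the complementary subset. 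Expanding $\circ$ bilinearly, $w$ takes the form $\sum_i\alpha_i\,u_i\circ v_i$ with $u_i,v_i$ monomials, which is precisely \eqref{m-lie}.

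I expect the point requiring the most care to be the very first one: that the ``global'' inclusion $(F_{\V}[X])^m\subseteq I_2(F_{\V}[X])$ reduces to membership in $I_2(F_{\V}[X])$ of the degree-$m$ multilinear monomials alone (equivalently, to the vanishing of the single homogeneous component $\bar F_m$). This rests on the fact that a homogeneous identity of a homogeneous variety may be evaluated on arbitrary elements, in particular on elements of degree greater than $1$, which is what propagates the vanishing from degree $m$ to all higher degrees. The remaining ingredients — the bijection between monomials and purely Lie products, and the multilinearization bringing membership in $I_2(F_{\V}[X])$ into the form \eqref{m-lie} — are routine.
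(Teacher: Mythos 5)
Your proof is correct and follows essentially the same route as the paper's: the key step in both is that modulo $I_2(F_{\V}[X])$ every product satisfies $xy\equiv\tfrac12[x,y]$, so any product of $m$ elements reduces to purely Lie products of degree $m$ (the paper expands $xy=\tfrac12([x,y]+x\circ y)$ and discards the terms containing $\circ$, while you pass to the anticommutative quotient, which is the same computation). You additionally spell out the routine details (relative freeness of the quotient, the bijection between monomials and Lie products, and the multihomogeneous-component extraction for the converse) that the paper treats as evident.
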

\begin{proof}
Clearly, any admissible variety of index  $m$ satisfies the given condition. Assume now that $\V$ satisfies $\eqref{m-lie}$ and prove that $\V$ is $m$-admissible.
Let $U$ be a product of  $m$ elements of the algebra $F_{\V}[X]$. Replace all the products  $xy$ in  $U$ by the sums $xy=\tfrac12 ([x,y]+x\circ y)$, then $U$ is represented as a sum of monomials  $u_i$ with respect to operations  $[x,y]$ and   $x\circ y$. If such a monomial  $u_i$ contains at least one operation $x\circ y$ then in view of the fact that  $I_2(F_{\V}[X])$ is an ideal, we would have $u_i\in I_2(F_{\V}[X])$. And if $u_i$ does not contain operations $x\circ y$,  then it is a purely Lie product and  $u_i\in I_2(F_{\V}[X])$ by  \eqref{m-lie}.
\end{proof}

\begin{cor}\label{cor3.0}
If $((F_{\V}[X])^{(-)})^m=0$ the the variety $\V$ is admissible of index $m$.
\end{cor}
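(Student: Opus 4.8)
The plan is to deduce the corollary directly from Proposition \ref{prop3.01}. Recall that $(F_{\V}[X])^{(-)}$ denotes the algebra with the same underlying vector space as $F_{\V}[X]$ but equipped with the commutator product $[x,y]=xy-yx$, and that for a (nonassociative) algebra $B$ the symbol $B^{m}$ stands for the span of all products of $m$ elements of $B$ taken with arbitrary arrangements of parentheses. Hence the hypothesis $((F_{\V}[X])^{(-)})^{m}=0$ says precisely that every bracket product of $m$ elements of $F_{\V}[X]$, with any parenthesization, vanishes in $F_{\V}[X]$; in particular every ``purely Lie'' product $w(x_1,\ldots,x_m)$ of degree $m$ in the free generators is identically zero.

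Next I would invoke Proposition \ref{prop3.01}: the variety $\V$ is admissible of index $m$ as soon as for every purely Lie product $w$ of degree $m$ a relation of type \eqref{m-lie}, namely $w=\sum_i\a_i u_i\circ v_i$ with $\a_i\in F$ and monomials $u_i,v_i$, holds in $\V$. Under our hypothesis each such $w$ is already $0$ in $F_{\V}[X]$, so the required relation is satisfied in the trivial way — one may take the empty sum, or all $\a_i=0$. Thus the hypothesis of Proposition \ref{prop3.01} is met, and we conclude that $\V$ is admissible of index $m$.

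The only point that deserves a word of care is matching the definition of $((F_{\V}[X])^{(-)})^{m}$ with the notion of ``purely Lie product of degree $m$'' appearing in Proposition \ref{prop3.01}; once one observes that the latter are, by definition, exactly the $m$-fold products in the algebra $(F_{\V}[X])^{(-)}$, there is no real obstacle and the argument is immediate. Since $F$ is infinite and the variety is homogeneous, it is moreover harmless to read the hypothesis as the vanishing of the multilinear Lie products $w(x_1,\ldots,x_m)$ in distinct generators, which is the form in which identities enter Proposition \ref{prop3.01}.
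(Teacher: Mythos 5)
Your proof is correct and is exactly the intended deduction: the paper states the corollary without proof as an immediate consequence of Proposition \ref{prop3.01}, since the hypothesis $((F_{\V}[X])^{(-)})^{m}=0$ makes every purely Lie product of degree $m$ vanish, so the identities \eqref{m-lie} hold trivially. Your closing remark on matching the two notions of $m$-fold Lie product (and reducing to the multilinear case via homogeneity) is the right point of care and is consistent with the paper's conventions.
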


\smallskip

The idea of the proof of the following lemma goes back to \cite[Lemma 8]{Sh1971}.
\begin{lem}\label{lem3.1}
Let $A\in\V$ where $\V$ is a locally admissible variety. If $A$ is finitely generated then so is the Jordan algebra  $A^{(+)}$.
\end{lem}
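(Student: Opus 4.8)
The plan is to reduce everything to the relatively free algebra, where a grading by degree is available, and then run an induction on degree: local admissibility lets us rewrite a homogeneous element of sufficiently large degree as a sum of Jordan products $u\circ v$ of homogeneous elements of \emph{strictly smaller} degree, which is exactly what such an induction needs.

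First I would observe that it is enough to treat the case $A=F_\V[X_k]$, the relatively free algebra of finite rank $k$: every finitely generated $A\in\V$ is a homomorphic image of such an algebra, a surjective homomorphism of algebras is also a surjective homomorphism of the adjoint Jordan algebras, and the image of a generating set of $F_\V[X_k]^{(+)}$ generates $A^{(+)}$. Write $F=F_\V[X_k]$ and fix $n=n(k)$ with $F^n\subseteq I_2(F)$ as in \eqref{n(k)}; enlarging $n$ if necessary we may assume $n\ge 2$ (if $F=F^1\subseteq I_2(F)$ then also $F^2\subseteq F^1\subseteq I_2(F)$). Since $\V$ is homogeneous, $F=\bigoplus_{d\ge 1}F_d$ with each $F_d$ finite-dimensional, and $I_2(F)=(F^{(+)})^2$ is a \emph{graded} subspace whose degree-$d$ part is spanned by the elements $u\circ v$ with $u,v$ homogeneous and $\deg u+\deg v=d$ (decompose each factor of a product $p\circ q$ into homogeneous components). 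A routine induction on $m$ shows that every homogeneous element of degree $d\ge m$ lies in $F^m$: a monomial of degree $d\ge m\ge 2$ splits as a product $\mu_1\mu_2$ of homogeneous monomials, and choosing $1\le i\le m-1$ with $i\le\deg\mu_1$ and $m-i\le\deg\mu_2$ (possible since $\deg\mu_1+\deg\mu_2=d\ge m$) gives $\mu_1\in F^i$, $\mu_2\in F^{m-i}$, hence $\mu_1\mu_2\in F^m$. In particular $F_d\subseteq F^n\subseteq I_2(F)$ for all $d\ge n$.

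Next I would let $S$ be a finite basis of $\bigoplus_{1\le d\le n-1}F_d$ and let $B$ be the Jordan subalgebra of $F^{(+)}$ generated by $S$; the claim to prove is $B=F$, which immediately yields that $F^{(+)}$ is finitely generated. It suffices to show $F_d\subseteq B$ for all $d\ge 1$, and this I would prove by induction on $d$. For $d\le n-1$ it holds by the choice of $S$. For $d\ge n$ and $w\in F_d$ we have $w\in I_2(F)$, so $w=\sum_i\alpha_i\,u_i\circ v_i$ with $\alpha_i\in F$ and $u_i,v_i$ homogeneous, $\deg u_i+\deg v_i=d$; since $n\ge 2$, both $\deg u_i$ and $\deg v_i$ are strictly less than $d$, so $u_i,v_i\in B$ by the induction hypothesis, and $w\in B$ because $B$ is closed under $\circ$. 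This completes the induction, and hence $F^{(+)}=B$ is finitely generated.

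The argument is short, and the one point that must be handled with care is the grading: without knowing that $I_2(F)$ is a homogeneous subspace one would only obtain a decomposition $w=\sum\alpha_i u_i\circ v_i$ with factors of uncontrolled degree, and the induction would collapse. Homogeneity of $\V$ together with the non-unitality of $F$ (so that a homogeneous factor appearing in an element of $I_2(F)\cap F_d$ necessarily has degree between $1$ and $d-1$) is exactly what makes the descent work, and is the analogue in the present generality of the device used in \cite[Lemma 8]{Sh1971}.
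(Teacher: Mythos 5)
Your proof is correct and follows essentially the same route as the paper: reduce to the relatively free algebra of finite rank, take the low-degree elements (the paper uses monomials of degree $\le n(k)$, you use a basis of the homogeneous components of degree $<n(k)$) as Jordan generators, and induct on degree using homogeneity of $\V$ to write any element of larger degree as $\sum\a_i u_i\circ v_i$ with factors of strictly smaller degree. The only difference is that you spell out the grading argument in more detail than the paper does.
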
 
\Proof
 It suffices to consider the case when the algebra  $A$ is free in  $\V$. Let
$A = F_{\V} [x_1,\ldots,x_k]$ be a $\V$-free algebra on generators $x_1,\ldots,x_k$. Then $A^{n(k)}\subset I_2(A)$. Put $Y=\{v\in A\,|\, v \hbox{ is a monomial on } x_i \hbox{ of degree } \deg v \leq n(k)\}$, and let $J$ be a subalgebra of the algebra $A^{(+)}$ generated by the set $Y$.
Let us show that $A^{(+)}=J$. It suffices to prove that any monomial  $v$ (on variables $x_1,\ldots,x_k$) belongs to $J$. Apply induction on  $\deg v$. If $\deg v\leq n(k)$ then $v\in Y\subset J$. 
Let $\deg v>n(k)$, then $v\in I_2(A),\ v=\sum \a_i u_i\circ v_i$. By homogenety of the variety $\V$ we have $\deg u_i+\deg v_i=\deg v,$ hence $\deg u_i,\deg v_i<\deg v$, and by induction  $u_i,v_i\in J$.
Therefore, $v\in J$ and  $A^{(+)}=J$.

\ctd

\begin{lem}\label{lem3.2}
Let $A$ be a finitely generated algebra from a locally admissible variety  $\V$, and  $I$ is an ideal in  $A$. Then there exists a natural number  $m$ such that for any elements  $a_1,\ldots,a_m\in A$ 
\bes
IT_{a_1}\cdots T_{a_m}\subseteq I\circ A, \hbox{ where } \ T_{a_j}\in\{R_{a_j},L_{a_j}\}.
\ees
If  $\V$ is an admissible variety then the statement of the lemma is valid for any algebra $A\in\V$, not necessarily finitely generated.
\end{lem}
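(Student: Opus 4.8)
The plan is to exploit the locally admissible hypothesis to pass from products in $A$ to products in the Jordan algebra $A^{(+)}$, and then to run a filtration-style argument on $I$ regarded as a subspace of $A^{(+)}$. First I would reduce to the free case: since $\V$ is homogeneous and $I$ is generated by certain elements, it suffices to prove the statement when $A$ is $\V$-free on finitely many generators and $I$ is the ideal generated by the images of a finite set of free generators (the general case follows by specialization). By Lemma \ref{lem3.1}, the Jordan algebra $A^{(+)}$ is then finitely generated, say by the finite set $Y$ of monomials of degree $\le n(k)$. The key translation is that each right or left multiplication $R_a$ or $L_a$ on $A$ can be written as $\tfrac12(R^{+}_a \pm \text{ad-type terms})$: explicitly $R_a = \tfrac12(R^{+}_a + \text{ad}_a^{-})$ and $L_a = \tfrac12(R^{+}_a - \text{ad}_a^{-})$ where $R^{+}_a$ is multiplication in $A^{(+)}$ and $\text{ad}_a^{-}x = [x,a]$ (up to sign/normalization). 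So a product $IT_{a_1}\cdots T_{a_m}$ expands into a sum of terms, each of which is $I$ acted on by a word in the operators $R^{+}_{a_j}$ and $\text{ad}^{-}_{a_j}$.

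Next I would handle the two kinds of operators. Terms containing at least one $R^{+}_{a_j}$ land in $I \circ A$ directly, because $I \circ A$ is stable under right Jordan multiplication (it is an ideal of $A^{(+)}$, or at any rate $(I\circ A)\circ A \subseteq I\circ A$ follows since $I$ is an ideal and $I_2$ absorbs products), so any such term already lies in $I\circ A$. The remaining terms are of the form $I\,\text{ad}^{-}_{b_1}\cdots\text{ad}^{-}_{b_r}$ — iterated commutators of elements of $I$ against elements of $A$. Here I would invoke local admissibility once more, this time applied to the ideal $I$ itself: the quotient $A/I_2(A)$ is anticommutative and (locally) nilpotent, so long enough words in the operators $\text{ad}^{-}$ push everything into $I_2(A) = (A^{(+)})^2 = A\circ A$, and more precisely, working inside $I$, a sufficiently long iterated bracket of $I$ with $A$ lands in $I\circ A$. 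One makes this quantitative: because $A^{(+)}$ is finitely generated as a Jordan algebra, and $I$ is a finitely generated ideal, there is a uniform bound $N$ so that $r \ge N$ forces $I\,\text{ad}^{-}_{b_1}\cdots\text{ad}^{-}_{b_r} \subseteq I\circ A$. Choosing $m$ large enough that every word of length $m$ in the $T_{a_j}$ either contains an $R^{+}$ or, after collecting, an $\text{ad}^{-}$-subword of length $\ge N$, gives the claim.

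The main obstacle — and the step I would spend the most care on — is the quantitative control in the purely-bracket case: showing that iterated $\text{ad}^{-}$-words of bounded length applied to $I$ fall into $I\circ A$, with a bound depending only on the number of generators. The point is that $A^{(+)}/$(image of $I\circ A$) is again in a locally admissible variety and is finitely generated, so its commutator structure is nilpotent of bounded class; but one must be careful that the relevant nilpotency is of the minus-multiplication restricted to the subspace $I$, not just of $A^{(-)}$ globally, and that the Jordan identities are genuinely being used to keep the estimate finite. Once this bound is in hand, the expansion-and-collect argument above closes the proof; for the admissible (not merely locally admissible) case the same argument applies with the uniform index of admissibility $n$ replacing the rank-dependent $n(k)$, so no finite-generation hypothesis on $A$ is needed.
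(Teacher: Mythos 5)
There are two genuine gaps here, and the second one is exactly the point where the paper's proof does something you haven't reproduced. First, your dichotomy ``the word contains an $R^{+}_{a_j}$'' versus ``the word is purely $\mathrm{ad}^{-}$'' does not work as you use it. If the last occurrence of $R^{+}$ sits at position $j<m$, then after that step you are in $I\circ A$, but the remaining operators are $\mathrm{ad}^{-}$'s, and $[\,I\circ A,\,A\,]$ need not lie in $I\circ A$ (it only lies in $I$). Your observation $(I\circ A)\circ A\subseteq I\circ A$ covers stability under $R^{+}$, not under $\mathrm{ad}^{-}$, so mixed words with brackets after the last Jordan multiplication are not handled; they reduce back to the pure-bracket problem. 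Second, and more seriously, the pure-bracket claim --- that $I\,\mathrm{ad}^{-}_{b_1}\cdots\mathrm{ad}^{-}_{b_r}\subseteq I\circ A$ for $r$ large, uniformly --- is asserted but never proved. What local admissibility gives you directly is $A^{n(k)}\subseteq (A^{(+)})^{2}=A\circ A$; getting $I\circ A$ rather than $A\circ A$, for an arbitrary (not finitely generated) ideal $I$, is precisely the difficulty you flag in your last paragraph as ``the step I would spend the most care on,'' and no argument is supplied. Your proposed reduction to $I$ finitely generated is also not available: the lemma makes no such hypothesis, and the bound $m$ must not depend on $I$.

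The paper closes both gaps at once with a single genericity trick that your proposal is missing. One works in the free algebra $F_{\V}[x_1,\ldots,x_k,y]$ on \emph{one extra} variable $y$ and takes $m=n(k+1)-1$. Then $yT_{f_1}\cdots T_{f_m}$ is a product of $n(k+1)$ elements, hence lies in $I_2(F_{\V}[x_1,\ldots,x_k,y])$, i.e.\ it equals $\sum u_i\circ v_i$; by homogeneity of the variety this element has degree $1$ in $y$, so one may take each $u_i$ to be a monomial containing $y$ exactly once and each $v_i$ to be free of $y$. Specializing $y\mapsto b\in I$ and $f_j\mapsto a_j$ sends each $u_i$ into $I$ (because $I$ is an ideal and $u_i$ involves $b$), giving $bT_{a_1}\cdots T_{a_m}\in I\circ A$ with $m$ depending only on $k$. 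This bypasses the $R^{+}/\mathrm{ad}^{-}$ expansion entirely and handles arbitrary ideals uniformly; without it, or some substitute for the quantitative pure-bracket estimate, your argument does not go through.
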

\Proof
Let $A$ have $k$ generators. Consider the $\V$-free algebra $F_{\V}[x_1,\ldots,x_k,y]$, and let  $m=n(k+1)-1$ from \eqref{n(k)}. Then for any 
$f_1,\ldots,f_{m}\in F_{\V}[x_1,\ldots,x_k]$ we have
\bee\label{y}
yT_{f_1}\cdots T_{f_m}=\sum u_i\circ v_i, 
\eee
where all $u_i$ contain $y$, and $v_i\in F_{\V}[x_1,\ldots,x_k]$. Let now   $a_1,\ldots,a_m\in A$, then choose elements  $f_1,\ldots,f_m\in F_{\V}[x_1,\ldots,x_k]$, which are pre-images of the elements   $a_1,\ldots,a_m$ under the natural epimorphism   $\pi:F_{\V}[x_1,\ldots,x_k]\rightarrow A$. Let $b$ be an arbitrary element from  $I$; we extend the epimorphism  $\pi$ on the algebra $F_{\V}[x_1,\ldots,x_k,y]$ by setting  $\pi(y)=b$. Then equality \eqref{y}  gives
\bes
bT_{a_1}\cdots T_{a_m}=\sum u_i\circ v_i, \hbox{ where } u_i\in I, v_i\in A.
\ees
This proves the lemma  for locally admissible  $\V$. Notice that the number $m$ here depends only on the number of generators of the algebra  $A$ and does not depend on the ideal $I$.

If $\V$ is an admissible variety then all the arguments remain true for any algebra  $A\in\V$, one has only to take   $m=n-1$  for $n$ from \eqref{n}.

\ctd

The following lemma was proved in  \cite{Sh1971}
\begin{lem}\label{lem3.3} (\cite{Sh1971})
Let $A$  be an arbitrary algebra. Then every element  $v\in A^{2^n}$  can be represented in the form   $a_1T_{a_2}\ldots T_{a_n}$, where   $a_i\in A$  and	 $T_{a_j}\in\{L_{a_j},R_{a_j}\}$.
\end{lem}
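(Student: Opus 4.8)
The plan is to reduce the statement to a combinatorial fact about a single bracketed product. By definition $A^{2^n}$ is the linear span of all products of $2^n$ elements of $A$ taken with every possible arrangement of parentheses, and since $R$ and $L$ are additive in their subscript, it suffices to establish the representation $v=a_1T_{a_2}\cdots T_{a_n}$ for one such product $v$; a general element of $A^{2^n}$ is then a sum of expressions of this shape, which is what the applications (such as in Lemma~\ref{lem3.2}) actually require. No earlier result is needed: the argument is elementary.

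So I would fix a product $v$ of $2^n$ elements of $A$ and record its parenthesization as a rooted binary tree $\tau$ in which every internal vertex has exactly two sons: the $2^n$ leaves are the factors of $v$, and each vertex carries the product of the two sub-monomials sitting at its two maximal proper subtrees. The one quantitative input is the elementary bound that such a tree of height $h$ has at most $2^h$ leaves; hence $\tau$ has height at least $n$, and truncating a longest root-to-leaf branch to length $n-1$ produces a descending chain of subtrees $\tau=\tau_0\supsetneq\tau_1\supsetneq\cdots\supsetneq\tau_{n-1}$ in which each $\tau_{i+1}$ is one of the two maximal proper subtrees of $\tau_i$.

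I would then read the $a_i$ off this chain: for $i=0,\dots,n-2$ let $a_{n-i}\in A$ be the sub-monomial carried by the maximal proper subtree of $\tau_i$ different from $\tau_{i+1}$, and let $a_1\in A$ be the sub-monomial carried by $\tau_{n-1}$. By the meaning of the tree, the sub-monomial of $\tau_i$ equals (sub-monomial of $\tau_{i+1}$)$\,T_{a_{n-i}}$, where $T_{a_{n-i}}=R_{a_{n-i}}$ or $L_{a_{n-i}}$ according as the detached subtree was the right or the left factor; running these $n-1$ identities for $i=0,1,\dots,n-2$ in turn yields $v=a_1T_{a_2}\cdots T_{a_n}$. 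The degree bookkeeping is automatic, since the $n-1$ detached subtrees together with $\tau_{n-1}$ partition the leaves of $\tau$, so $\deg a_1+\cdots+\deg a_n=2^n$. Equivalently, this can be packaged as an induction on $n$: split $v=v'v''$ at the top level, detach the factor of smaller degree as $a_n$ (recording whether it stood on the left or the right), continue with the larger factor — which still has degree at least $2^{n-1}\ge 2$ and hence can again be split — and repeat $n-1$ times; the degree bound guarantees the process never stalls before it outputs $a_1$.

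I do not expect a genuine obstacle in this argument. The only points that need care are the off-by-one count (there are $n$ elements $a_1,\dots,a_n$ but only $n-1$ operators $T_{a_2},\dots,T_{a_n}$, matching a root path of length $n-1$ in a tree of height $\ge n$) and correctly tracking, at each step, whether the detached factor contributes an $L$ or an $R$.
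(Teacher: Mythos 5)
Your argument is correct. The paper itself does not prove Lemma~\ref{lem3.3} --- it is quoted from \cite{Sh1971} --- but the classical proof is precisely the induction you sketch at the end (split a monomial of degree $\geq 2^n$ as $v'v''$, keep the factor of degree $\geq 2^{n-1}$, detach the other as $a_n$ with $L$ or $R$ according to its position, and iterate); your binary-tree formulation is the same argument made explicit, with the height bound $2^h$ supplying the needed path of length $n-1$. Your reading of ``every element'' as ``every element is a sum of elements of the stated form'' (equivalently, that it suffices to treat a single monomial) is the right one: the literal single-expression claim already fails in a free algebra for a sum of two monomials with distinct outermost factors, and the only use of the lemma here, in Proposition~\ref{prop3.1}, is a membership-in-a-subspace conclusion that passes to sums term by term.
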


\begin{prop}\label{prop3.1}
Let $A$ be a  finitely generated algebra from a  locally admissible variety $\V$. Then for any natural number $n$ there exists a number  $f(n)$ such that  $A^{{f(n)}}\subseteq (A^{(+)})^n$.

If $\V$ is an admissible variety then the statement is true for any algebra  $A\in\V$, not necessarily finitely generated.
\end{prop}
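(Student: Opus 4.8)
The plan is to induct on $n$, keeping $f(n)$ of the form $2^{r(n)}$ so that Lemma~\ref{lem3.3} applies to $A^{f(n)}$. The case $n=1$ is immediate, since $(A^{(+)})^{1}$ coincides with $A$ as a set, so $r(1)=0$ works. The inductive step rests on two observations. The first, which I expect to carry the real content, is that for every $m$ the subspace $(A^{(+)})^{m}$ is an ideal of $A$, not merely of $A^{(+)}$: this is trivial for $m=1$; for $m=2$ it is the assertion of \cite{Sh1971} that $I_2(A)=(A^{(+)})^{2}$ is an ideal of $A$; and for $m\ge 3$ it follows by induction, writing $xy=\tfrac12(x\circ y+[x,y])$ and using that $(A^{(+)})^{m}$, being a power of $A^{(+)}$, is invariant under every derivation of $A^{(+)}$, in particular under the map $y\mapsto[y,x]=(R_x-L_x)(y)$, which in a noncommutative Jordan algebra is a derivation of $A^{(+)}$. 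The second observation is the trivial inclusion $(A^{(+)})^{n}\circ A\subseteq(A^{(+)})^{n+1}$ (take $i=n$ and $j=1$ in the defining sum for the $(n+1)$-st power of the commutative algebra $A^{(+)}$), which shows it suffices to force a power of $A$ inside $(A^{(+)})^{n}\circ A$.

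Now assume $A^{2^{r(n)}}\subseteq(A^{(+)})^{n}$, and let $m$ be the integer supplied by Lemma~\ref{lem3.2} for the algebra $A$; it depends only on the number of generators of $A$ and not on the ideal to which Lemma~\ref{lem3.2} is afterwards applied. Pick $v\in A^{2^{r(n)+m}}$. By Lemma~\ref{lem3.3}, or rather by tracking the degrees in its proof as one splits off the right-hand factors one at a time, $v$ is a sum of elements $z\,T_{c_1}\cdots T_{c_m}$ with $T_{c_i}\in\{L_{c_i},R_{c_i}\}$ and with $z\in A^{2^{r(n)}}\subseteq(A^{(+)})^{n}$ by the inductive hypothesis. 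Applying Lemma~\ref{lem3.2} to the ideal $I=(A^{(+)})^{n}$ gives $z\,T_{c_1}\cdots T_{c_m}\in(A^{(+)})^{n}\circ A\subseteq(A^{(+)})^{n+1}$, hence $v\in(A^{(+)})^{n+1}$. Thus $A^{2^{r(n)+m}}\subseteq(A^{(+)})^{n+1}$, and we may take $r(n+1)=r(n)+m$, so that $f(n)=2^{(n-1)m}$ serves. For the admissible case the computation is verbatim the same for an arbitrary $A\in\V$: one only replaces $m$ by $n_0-1$, where $n_0$ is a global index of admissibility as in \eqref{n}; by the second part of Lemma~\ref{lem3.2} this integer works uniformly for all $A\in\V$, which is exactly what removes the finite-generation hypothesis.

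The single nonroutine point is the first observation, that every $(A^{(+)})^{m}$ is an ideal of $A$. Its role is essential: if Lemma~\ref{lem3.2} could only be fed the ideal $I_2(A)=(A^{(+)})^{2}$, the argument above would always produce $v\in I_2(A)\circ A=(A^{(+)})^{3}$, no matter how large $n$ is, and the induction would stall at $n=2$. The remaining ingredients — matching the powers of $2$ coming out of Lemma~\ref{lem3.3} with the indices $r(n)$, and the one-line inclusion $(A^{(+)})^{n}\circ A\subseteq(A^{(+)})^{n+1}$ — are straightforward.
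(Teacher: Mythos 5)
Your proof is correct and follows the same inductive scheme as the paper: Lemma \ref{lem3.3} to write elements of $A^{2^{N}}$ as sums of operator words, then Lemma \ref{lem3.2} to push the result into $I\circ A\subseteq (A^{(+)})^{n+1}$. The one genuine divergence is the ideal you feed to Lemma \ref{lem3.2}. You take $I=(A^{(+)})^{n}$, which forces you to prove the auxiliary fact that every power of $A^{(+)}$ is an ideal of $A$; your argument for this is sound, since $R_x-L_x$ is indeed a derivation of $A^{(+)}$ in any flexible algebra and derivations preserve powers. The paper instead takes $I=A^{f(n-1)}$, which is an ideal of $A$ for free (powers of any algebra are ideals), and invokes the induction hypothesis only \emph{after} applying Lemma \ref{lem3.2}, via $I\circ A=A^{f(n-1)}\circ A\subseteq (A^{(+)})^{n-1}\circ A^{(+)}\subseteq (A^{(+)})^{n}$. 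So your assertion that without your first observation ``the induction would stall at $n=2$'' is not accurate --- the paper's choice of ideal sidesteps that observation entirely --- but your route is valid, and the refined form of Lemma \ref{lem3.3} you use (keeping the leading factor in $A^{2^{r(n)}}$ rather than merely in $A^{r(n)}$) does follow from its proof exactly as you indicate.
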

\Proof
Consider the case of locally admissible variety, for an admissible variety the proof is similar. We will use induction on  $n$. Clearly, $f(1)=1,\  f(2)=n(k)$, where $k$ is a number of generators of the algebra $A$ and $n(k)$ is the index of local admissibility of the variety  $\V$ from \eqref{n(k)}. Assume that  $f(n-1)$ exists, we prove then that one can take  $f(n)=2^{f(n-1)+m}$, where  $m$ is the number defined in lemma  \ref{lem3.2}.

Let $a\in A^{2^{f(n-1)+m}}$. By lemma \ref{lem3.3}, 
\bes
a=a_1T_{a_2}\ldots T_{a_{f(n-1)}}T_{a_{f(n-1)+1}}\ldots T_{a_{f(n-1)+m}}, \hbox{ where } \ a_j\in A,\ T_{a_j}\in\{R_{a_j},L_{a_j}\}.
\ees
By the induction assumption, $b=a_1T_{a_2}\ldots T_{a_{f(n-1)}}\in A^{f(n-1)}\subseteq (A^{(+)})^{n-1}$. Applying now lemma   \ref{lem3.2} for the ideal $I=A^{f(n-1)}$, we get
 \bes
 bT_{a_{f(n-1)+1}}\ldots T_{a_{f(n-1)+m}}\in I\circ A=A^{f(n-1)}\circ A\subseteq (A^{(+)})^{n-1}\circ A\\
 =(A^{(+)})^{n-1}\circ A^{(+)}\subseteq (A^{(+)})^n.
 \ees
The proposition is proved.

\ctd
\begin{cor}\label{cor3.1}
Let $A$ be a prime algebra from a locally admissible variety  $\V$.  If $A$ does not contain nonzero locally nilpotent ideals then  $A$ is strongly prime. In particular, every simple algebra from   $\V$ is strongly prime.
\end{cor}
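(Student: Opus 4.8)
The plan is to prove the first statement by reductio. Assume $A$ is prime, $A\in\V$ with $\V$ locally admissible, $A$ has no nonzero locally nilpotent ideals, and suppose, for contradiction, that $A$ is not strongly prime; by definition this means $A$ possesses a nonzero ideal $I$ with $I^{(+)}\subseteq\mathcal{M}(A^{(+)})$. It then suffices to show that $I$ is locally nilpotent, since this contradicts the hypothesis on $A$ and forces $I=0$.

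So fix arbitrary finitely many elements $a_1,\dots,a_r\in I$ and let $C=\langle a_1,\dots,a_r\rangle$ be the subalgebra of $A$ they generate. As $I$ is an ideal, $C\subseteq I$, and $C$ is a finitely generated algebra in $\V$. By Lemma~\ref{lem3.1} the Jordan algebra $C^{(+)}$ is then finitely generated. Moreover $C^{(+)}\subseteq I^{(+)}\subseteq\mathcal{M}(A^{(+)})$, and the McCrimmon radical of a Jordan algebra is locally nilpotent; hence the finitely generated Jordan subalgebra $C^{(+)}$ of $\mathcal{M}(A^{(+)})$ is nilpotent, say $(C^{(+)})^n=0$. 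Applying Proposition~\ref{prop3.1} to the finitely generated algebra $C\in\V$ produces a number $f(n)$ with $C^{f(n)}\subseteq(C^{(+)})^n=0$, so $C$ is nilpotent. Since $a_1,\dots,a_r$ were arbitrary, $I$ is locally nilpotent, the desired contradiction; hence $A$ is strongly prime. For the final assertion, a simple algebra $A\in\V$ is prime, and its only nonzero ideal is $A$ itself; since a simple algebra is not locally nilpotent, $A$ contains no nonzero locally nilpotent ideal and the first part applies.

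I expect the crux to be the passage between Jordan-nilpotence in $A^{(+)}$ and genuine nilpotence in $A$: without Lemma~\ref{lem3.1} the subalgebra $C^{(+)}$ need not be finitely generated, and mere containment in a locally nilpotent algebra would not yield $(C^{(+)})^n=0$; and without Proposition~\ref{prop3.1} one could not transfer this vanishing back to $C$. The only ingredient external to this section is the local nilpotence of the McCrimmon radical of a Jordan algebra (Zelmanov's theorem), which is precisely what turns ``$C^{(+)}$ finitely generated and $C^{(+)}\subseteq\mathcal{M}(A^{(+)})$'' into nilpotence of $C^{(+)}$.
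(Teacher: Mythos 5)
Your proof is correct and follows essentially the same route as the paper: pick a finitely generated subalgebra $B$ of the offending ideal $I$, use Lemma~\ref{lem3.1} to see $B^{(+)}$ is a finitely generated Jordan algebra sitting inside the locally nilpotent McCrimmon radical (Zelmanov), hence nilpotent, and then Proposition~\ref{prop3.1} to pull nilpotence back to $B$, contradicting the absence of nonzero locally nilpotent ideals. Your added remark on the ``in particular'' clause (a simple algebra is prime and cannot be locally nilpotent) is consistent with what the paper itself invokes elsewhere.
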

\Proof Assume that  $A$  is not strongly prime, then  $A$ contains a nonzero ideal  $I$ such that $I^{(+)}\subset{\mathcal M}(A^{(+)})$, where ${\mathcal M}(A^{(+)})$ is the McCrimmon radical of the algebra $A^{(+)}$. Due to  \cite{Zel}, the McCrimmon radical is locally nilpotent, hence the ideal  $I^{(+)}$ is locally nilpotent. Let $B$ be a finitely generated subalgebra of  $I$, then by lemma  \ref{lem3.1} the algebra $B^{(+)}$ is finitely generated and henceforth is nilpotent. Let $(B^{(+)})^n=0$, then by proposition \ref{prop3.1} we have  $B^{f(n)}=0$. Thus the ideal  $I$ is locally nilpotent, a contradiction. Therefore, the algebra  $A$ is strongly prime.

\ctd

It was proved in  \cite{Sh1971}  that there exists a locally nilpotent radical in any admissible variety of noncommutative Jordan algebras. 
The above results admit to extend this fact to locally admissible varieties. Moreover, we will prove more general result that the property of local finiteness in  Shirshov`s sense  is radical in any locally admissible variety of algebras.

Recall the definition. Let  $A$ be an algebra over an associative commutative ring  $\Phi$, and let $Z$ be an ideal of the ring 
$\Phi$. A finitely generated algebra $A$ is called  {\em finite over $Z$} (in Shirshov's sense), if for some natural number  $m$ there exist elements  $a_1,\ldots,a_n\in A$ such that $A^m\subset Za_1+\cdots +Za_n$. If every finitely generated subalgebra of the algebra $A$ is finite over  $Z$, then $A$ is called  {\em locally finite over $Z$}.
  
\begin{thm}\label{thm3.1}
Let $\V$ be a locally admissible variety of noncommutative Jordan algebras over an associative commutative ring  $\Phi$, and let  $Z$ be an ideal of the ring $\Phi$. Then the locally finite over $Z$ radical  exists in  $\V$.
\end{thm}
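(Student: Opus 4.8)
The plan is to prove that the class $\R$ of all algebras from $\V$ that are locally finite over $Z$ is a Kurosh--Amitsur radical class; the associated operator $A\mapsto\R(A)$, with $\R(A)$ the largest $\R$-ideal of $A$, is then the desired radical. Since being locally finite over $Z$ is by definition a local property, it suffices to establish two closure properties: (i) $\R$ is closed under homomorphic images, and (ii) $\R$ is closed under ideal extensions, that is, $I\idl A$ with $I,\,A/I\in\R$ implies $A\in\R$. Indeed, (i) and (ii) together force every finite sum of $\R$-ideals to lie in $\R$, so by locality the sum $\R(A)$ of all $\R$-ideals of $A$ again lies in $\R$ — any finitely generated subalgebra of it is contained in a finite subsum — and $\R\big(A/\R(A)\big)=0$ follows from (ii) once more. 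Property (i) is immediate: an epimorphism sends a finite generating set to a finite generating set and a relation $B^m\subseteq Za_1+\cdots+Za_n$ to the corresponding relation in the image.

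The real content is (ii). By locality we may assume $A$ finitely generated. Since $A/I$ is finitely generated and finite over $Z$, we have $(A/I)^p\subseteq Z\bar c_1+\cdots+Z\bar c_r$ for suitable $p$ and $\bar c_1,\ldots,\bar c_r\in A/I$; lifting these to $c_1,\ldots,c_r\in A$ and adjoining them to a finite generating set of $A$ gives a finitely generated subalgebra $C\leq A$ with $A=C+I$. Now pass to the Jordan algebra $A^{(+)}$: by Lemma \ref{lem3.1} it is finitely generated; $I^{(+)}$ is an ideal of $A^{(+)}$ which is locally finite over $Z$ (a finitely generated Jordan subalgebra of $I^{(+)}$ lies in a finitely generated subalgebra of $I$), and $A^{(+)}/I^{(+)}\cong(A/I)^{(+)}$ is finite over $Z$ because $\big((A/I)^{(+)}\big)^m\subseteq(A/I)^m$. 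Granting for the moment the corresponding statement for Jordan algebras — that a finitely generated Jordan algebra having a locally-finite-over-$Z$ ideal with finite-over-$Z$ quotient is itself finite over $Z$ — we obtain $(A^{(+)})^q\subseteq Ze_1+\cdots+Ze_s$ for some $q,s$ with $e_i\in A$; then Proposition \ref{prop3.1}, applicable since $A$ is finitely generated in the locally admissible variety $\V$, yields $A^{f(q)}\subseteq(A^{(+)})^q\subseteq Ze_1+\cdots+Ze_s$. Hence $A$ is finite over $Z$, and property (ii), so the whole theorem, is reduced to the Jordan case.

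For the Jordan case the obstruction is that the ideal $K\idl J$ need not be finitely generated, so the naive degree argument above collapses (a high-degree product in $J$ may have a low-degree tail), and Lemma \ref{lem3.2} is vacuous in the commutative setting. The plan is to transfer the problem to associative algebras: replace $J$ by the (associative) multiplication algebra $M=M(\hat J)$ of its unitalization $\hat J=\Phi 1\oplus J$, check that $M$ is finitely generated, that the ideal of $M$ generated by the multiplication operators by elements of $K$ is locally finite over $Z$ while the corresponding quotient of $M$ is finite over $Z$, and that $J$ is finite over $Z$ once $M$ is; then invoke the classical fact that the locally-finite-over-$Z$ radical exists in the variety of associative algebras. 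Chaining the implications — the Jordan case from the associative case, (ii) from the Jordan case, and the radical from (i) and (ii) — completes the argument.

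I expect the genuine obstacle to be precisely this last, Jordan-to-associative step: making the passage between $J$ and its multiplication algebra faithful with respect to ``finitely generated'', ``ideal'', and ``finite over $Z$'', in particular handling the non-unital case and the $\Phi$-module bookkeeping in the definition of finiteness over $Z$ (at a few points one must check that the spanning elements can be taken inside the relevant subalgebra). Everything else — the radical-theoretic skeleton and the reduction from noncommutative Jordan algebras to Jordan algebras via Lemma \ref{lem3.1} and Proposition \ref{prop3.1} — should be routine.
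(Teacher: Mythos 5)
Your reduction is exactly the paper's argument: invoke the standard radical-theoretic scheme to reduce to the extension property for a finitely generated $A$ with a locally-finite-over-$Z$ ideal $I$ and finite-over-$Z$ quotient, pass to $A^{(+)}$ (finitely generated by Lemma \ref{lem3.1}, with $I^{(+)}$ locally finite over $Z$ and $A^{(+)}/I^{(+)}\cong(A/I)^{(+)}$ finite over $Z$), conclude that $A^{(+)}$ is finite over $Z$, and pull this back to $A$ via Proposition \ref{prop3.1}. The one point of divergence is the step you single out as the ``genuine obstacle'': the Jordan-algebra case of the extension property. You propose to establish it from scratch by transferring to the multiplication algebra of the unitalization and appealing to the associative theory, but you leave that transfer as a plan rather than a proof. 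The paper does not need this: the existence of the locally-finite-over-$Z$ radical in the variety of Jordan algebras is a known theorem of Zhevlakov and Shestakov \cite{ZS}, which is cited and applied directly. So your argument is complete modulo that single citation; as written, the Jordan step remains an unproved sketch (and the multiplication-algebra route, while plausible, would require real work to make the correspondence between ideals, finite generation, and finiteness over $Z$ faithful, especially in the non-unital setting, as you yourself note). Everything else in your proposal matches the paper's proof.
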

\Proof
Following the standard scheme (see  \cite{Sh1971, ZSSS}), it suffices to prove that if  $A$ is a finitely generated algebra from  $\V$ which contains a locally finite over  $Z$ ideal   $I$ such that the quotient algebra $A/I$ is finite over $Z$, then  $A$ is finite over $Z$. Consider the adjoint Jordan algebra $A^{(+)}$. By lemma  \ref{lem3.1}, it is finitely generated. The ideal $I^{(+)}$ is locally finite over  $Z$  in $A^{(+)}$, and the quotient algebra  $A^{(+)}/I^{(+)}\cong (A/I)^{(+)}$  is finite over  $Z$. Since the locally finite over $Z$ radical exists in the class of Jordan algebras (see \cite{ZS}), the algebra $A^{(+)}$ is finite over $Z$. Let  $(A^{(+)})^m\subset Za_1+\cdots+Za_n$ for some  $a_1,\ldots,a_n\in A$, then due to proposition   \ref{prop3.1} we have  $A^{f(m)}\subseteq (A^{(+)})^m\subset Za_1+\cdots+Za_n$, that is  $A$ is finite over $Z$.

\ctd

\begin{cor}\label{cor3.2}
In any locally admissible variety of algebras there exist locally nilpotent and locally finite (over a field) radicals.
\end{cor}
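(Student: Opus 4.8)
The plan is to obtain Corollary \ref{cor3.2} as a direct specialization of Theorem \ref{thm3.1}. A locally admissible variety is, by definition, a variety of noncommutative Jordan algebras over the base field $F$, so we may invoke Theorem \ref{thm3.1} with $\Phi=F$. Since the only ideals of $F$ are $0$ and $F$, there are just two instances to examine, and for each of them I would only need to recognize which familiar property the notion ``locally finite over $Z$'' specializes to.

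First I would take $Z=0$. A finitely generated algebra $A$ is finite over the zero ideal exactly when $A^{m}\subseteq 0\cdot a_{1}+\cdots+0\cdot a_{n}=0$ for some $m$, i.e.\ exactly when $A$ is nilpotent; hence ``locally finite over $0$'' is literally ``locally nilpotent''. Applying Theorem \ref{thm3.1} with $\Phi=F$ and $Z=0$ therefore produces the locally nilpotent radical in $\V$.

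Next I would take $Z=F$. Here a finitely generated algebra $A$ is finite over $F$ precisely when the subspace $A^{m}$ is finite-dimensional for some $m$; and since $A$ is finitely generated and the powers $A^{k}$ are decreasing, the quotient $A/A^{m}$ is spanned by the finitely many monomials of degree less than $m$ in the generators, so $A/A^{m}$, and then $A$ itself, is finite-dimensional. Thus ``locally finite over $F$'' coincides with ``locally finite-dimensional'', which justifies the terminology ``locally finite (over a field)'', and Theorem \ref{thm3.1} with $\Phi=F$ and $Z=F$ yields the corresponding radical.

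Since the whole argument is a specialization of Theorem \ref{thm3.1}, there is no real obstacle. The only point that deserves a word is the identification of Shirshov's notion of finiteness over $F$ with ordinary finite-dimensionality for finitely generated algebras, which is elementary and has been used above; everything else is immediate from Theorem \ref{thm3.1}.
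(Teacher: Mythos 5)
Your proof is correct and is exactly the intended derivation: the paper states Corollary \ref{cor3.2} as an immediate specialization of Theorem \ref{thm3.1}, with $Z=0$ giving the locally nilpotent radical and $Z=F$ giving the locally finite one. Your identification of ``finite over $0$'' with nilpotence and of ``finite over $F$'' with finite-dimensionality for finitely generated algebras is the right (and only) point needing verification, and you handle it correctly.
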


Let us present two more interesting results for (locally) admissible varieties.

\begin{thm}\label{thm3.2}
Every nil algebra of bounded index from a locally admissible variety  $\V$ is locally nilpotent.
\end{thm}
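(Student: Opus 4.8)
The plan is to reduce everything to the corresponding statement for Jordan algebras, which is available to us: as noted just before the definition of admissible varieties, every nil algebra of bounded index in the variety of Jordan algebras is locally nilpotent. Since local nilpotence is a property of finitely generated subalgebras, it suffices to fix an arbitrary finitely generated subalgebra $B$ of $A$ and show that $B$ is nilpotent.

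First I would observe that $B$ inherits from $A$ the property of being nil of bounded index, say $b^{N}=0$ for all $b\in B$. A noncommutative Jordan algebra is power-associative, so the $n$-th power of an element of $B$ computed in $B$ coincides with the $n$-th power computed in the adjoint Jordan algebra $B^{(+)}$; hence $B^{(+)}$ is again a nil Jordan algebra of index at most $N$. Because $\V$ is locally admissible and $B$ is finitely generated, Lemma \ref{lem3.1} gives that $B^{(+)}$ is finitely generated as a Jordan algebra.

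Now apply the Jordan input: a finitely generated nil Jordan algebra of bounded index is locally nilpotent, hence (being finitely generated) nilpotent, so $(B^{(+)})^{m}=0$ for some $m$. Then Proposition \ref{prop3.1}, applied to the finitely generated algebra $B$ in the locally admissible variety $\V$, yields a number $f(m)$ with $B^{f(m)}\subseteq (B^{(+)})^{m}=0$. Thus $B$ is nilpotent, and since $B$ was an arbitrary finitely generated subalgebra, $A$ is locally nilpotent.

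The only ingredient outside the machinery already developed (Lemma \ref{lem3.1} and Proposition \ref{prop3.1}) is the Jordan-algebra fact that finitely generated nil Jordan algebras of bounded index are nilpotent; this is exactly where the hypothesis ``of bounded index'' is indispensable and cannot be weakened to ``nil'', and it is the step I would regard as the main (external) obstacle. The remaining delicate point, transferring the boundedness of the index from $A$ to $B^{(+)}$, is handled by power-associativity and is routine.
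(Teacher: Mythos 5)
Your proof is correct and follows essentially the same route as the paper: pass to the adjoint Jordan algebra, use Lemma \ref{lem3.1} to see it is finitely generated, invoke the Jordan-algebra result (Zelmanov's theorem, in the paper's citation) that a finitely generated nil Jordan algebra of bounded index is nilpotent, and pull nilpotence back via Proposition \ref{prop3.1}. Your explicit appeal to power-associativity to transfer the bounded nil index to $B^{(+)}$ is a point the paper leaves implicit.
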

\Proof
Let $A$ be a finitely generated nil algebra of index  $n$ from $\V$. Then the Jordan algebra  $A^{(+)}$ is also a nil algebra of index $n$. By lemma  \ref{lem3.1}, the algebra  $A^{(+)}$ is finitely generated, hence by Zelmanov's theorem \cite{Zel}  it is nilpotent. Let $(A^{(+)})^n=0$, then by proposition  \ref{prop3.1} we have  $A^{f(n)}\subseteq (A^{(+)})^n=0$.

\ctd

\begin{cor}\label{cor3.21}
Every finite dimensional nil algebra  from a locally admissible variety  $\V$ is nilpotent.
\end{cor}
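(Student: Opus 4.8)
The plan is to derive this at once from Theorem \ref{thm3.2}, once we observe that a finite-dimensional nil algebra automatically has bounded index. First I would recall that every noncommutative Jordan algebra is power-associative, so for a single element $a\in A$ the powers $a,a^2,a^3,\ldots$ are unambiguously defined and span the commutative associative subalgebra $\langle a\rangle$. If $\dim A=d$, then $a,a^2,\ldots,a^{d+1}$ lie in a subspace of dimension at most $d$ and hence are linearly dependent; combined with the hypothesis that $a$ is nilpotent, a standard argument on the minimal polynomial of $a$ inside $\langle a\rangle$ yields $a^{d+1}=0$. Thus $A$ is a nil algebra of bounded index $n\le d+1$.

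Next I would apply Theorem \ref{thm3.2}: since $\V$ is locally admissible and $A\in\V$ is nil of bounded index, $A$ is locally nilpotent. Finally, a finite-dimensional algebra is in particular finitely generated, so $A$ is itself a finitely generated subalgebra of $A$; by the definition of local nilpotence this forces $A$ to be nilpotent, which is the assertion.

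I do not expect a genuine obstacle here: the corollary is a direct consequence of Theorem \ref{thm3.2}. The one point that deserves a word of care is the passage from ``finite-dimensional nil'' to ``nil of bounded index,'' which is exactly where power-associativity of noncommutative Jordan algebras enters; without it the expression $a^{n}$ would not even be well defined. As an alternative route that bypasses Theorem \ref{thm3.2}, one could pass to $A^{(+)}$, which is a finite-dimensional nil Jordan algebra and therefore nilpotent by the classical structure theory of finite-dimensional Jordan algebras, and then invoke Proposition \ref{prop3.1} (applicable because $A$, being finite-dimensional, is finitely generated) to get $A^{f(n)}\subseteq (A^{(+)})^{n}=0$ for the appropriate $n$; but the argument through Theorem \ref{thm3.2} is the shortest.
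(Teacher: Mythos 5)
Your argument is correct and is exactly the route the paper intends: the corollary is stated as an immediate consequence of Theorem \ref{thm3.2}, and you supply the two needed observations — that power-associativity makes a finite-dimensional nil algebra nil of bounded index, and that a finite-dimensional locally nilpotent algebra is nilpotent. The alternative route you sketch via $A^{(+)}$ and Proposition \ref{prop3.1} is also valid, but the main argument matches the paper's.
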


\begin{thm}\label{thm3.3}
Let  $\V$ be a locally admissible variety, and let  $A$  be a finitely generated algebra from $\V$. If the adjoint Jordan algebra  $A^{(+)}$  is a  $PI$-algebra, then $(Nil\, A)^{(+)}$ is a nilpotent ideal in the algebra  $A^{(+)}$.   If the variety $\V$  is admissible then in the assumptions of the theorem the nil radical  $Nil\,A$ is nilpotent.
\end{thm}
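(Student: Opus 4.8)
The plan is to push the statement down to the adjoint Jordan algebra $A^{(+)}$, where the required $PI$-theory is available, and then pull the conclusion back to $A$ by means of the results of the present section. First I would observe that, since $A$ is finitely generated, Lemma~\ref{lem3.1} shows that $A^{(+)}$ is a finitely generated Jordan algebra; being moreover a $PI$-algebra by hypothesis, its nil radical $Nil\,A^{(+)}$ is nilpotent, say $(Nil\,A^{(+)})^n=0$, by the theorem of Medvedev \cite{Med} (the same $PI$-result already invoked in the proof of Theorem~\ref{thm2.1}).

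The next step is to compare $Nil\,A$ with $Nil\,A^{(+)}$. A noncommutative Jordan algebra is power-associative, and the subalgebra generated by a single element is the same in $A$ and in $A^{(+)}$; hence an element is nilpotent in $A$ if and only if it is nilpotent in $A^{(+)}$. Moreover, if $I$ is an ideal of $A$ then $I^{(+)}$ is an ideal of $A^{(+)}$. Consequently every nil ideal $I$ of $A$ produces a nil ideal $I^{(+)}$ of $A^{(+)}$, and therefore $I^{(+)}\subseteq Nil\,A^{(+)}$. Summing over all nil ideals of $A$, the sum $Nil\,A$ satisfies $(Nil\,A)^{(+)}\subseteq Nil\,A^{(+)}$, so all its elements are nilpotent; thus $Nil\,A$ is in fact the maximal nil ideal of $A$, and $(Nil\,A)^{(+)}$ is an ideal of $A^{(+)}$ with $\big((Nil\,A)^{(+)}\big)^n\subseteq(Nil\,A^{(+)})^n=0$. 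This establishes the first assertion.

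For the second assertion, suppose $\V$ is admissible. Then $Nil\,A$, being an ideal of the algebra $A\in\V$, again belongs to $\V$, and by Proposition~\ref{prop3.1} in its admissible form --- which applies to an \emph{arbitrary} algebra of $\V$, finitely generated or not --- there is a number $f(n)$ with $(Nil\,A)^{f(n)}\subseteq\big((Nil\,A)^{(+)}\big)^n=0$. Hence $Nil\,A$ is nilpotent.

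I expect the genuinely new input to be small: the proof is an assembly of Lemma~\ref{lem3.1}, Proposition~\ref{prop3.1} and the cited $PI$-theory. The points that need care are (i) the inclusion $(Nil\,A)^{(+)}\subseteq Nil\,A^{(+)}$, which rests on the fact that nilpotency of individual elements transfers between $A$ and $A^{(+)}$ and on the resulting existence of $Nil\,A$ as a nil ideal; and (ii) in the admissible case, the observation that one cannot apply Lemma~\ref{lem3.1} to $Nil\,A$, since this ideal need not be finitely generated even when $A$ is --- it is precisely at this point that admissibility of $\V$, and not merely local admissibility, is required, which is why the hypothesis must be strengthened for the final statement.
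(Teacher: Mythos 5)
Your proposal is correct and follows essentially the same route as the paper: Lemma~\ref{lem3.1} to get $A^{(+)}$ finitely generated, the Shestakov--Medvedev theorem to make $Nil\,A^{(+)}$ nilpotent, the inclusion $(Nil\,A)^{(+)}\subseteq Nil\,A^{(+)}$ (which the paper states as clear and you usefully justify via power-associativity), and Proposition~\ref{prop3.1} in its admissible form for the final assertion. Your remark on why full admissibility, rather than local admissibility, is needed in the last step is exactly the point the paper's hypothesis is designed for.
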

\Proof
By lemma \ref{lem3.1}, the Jordan algebra $A^{(+)}$ is finitely generated, hence by Shestakov-Medvedev theorem \cite{Sh1983, Med} the nil radical  $Nil\, A^{(+)}$ is nilpotent.  
Let $(Nil\, A^{(+)})^n=0$. It is clear that $(Nil\, A)^{(+)}\subseteq Nil\, A^{(+)}$, hence $( (Nil\, A)^{(+)})^n \subseteq (Nil\, A^{(+)})^n=0.$

If the variety  $\V$ is admissible, then by proposition \ref{prop3.1} we get $(Nil\, A)^{f(n)}\subseteq( (Nil\, A)^{(+)})^n =0$.

\ctd

It remains an open the question whether the nil radical $Nil\,A$ is nilpotent when  the variety $\V$ is locally admissible?

\medskip

Recall that a variety $\V$ is called  {\em unitary closed} if for any algebra  $A\in\V$,  the algebra $A^{\sharp}$ obtained by adjoining the external unit element to  $A$, belongs to $\V$ as well. 

\begin{thm}\label{thm3.4}
Every strongly prime Kokoris algebra $A$  over a field of characteristic  0 from a locally admissible unitary closed variety  $\V$ is associative and commutative.
\end{thm}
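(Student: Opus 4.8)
The plan is to prove that $[A,A]=0$. Granting this, $A=A^{(+)}$ is commutative, and moreover the Kokoris identity \eqref{id_Kokoris} together with $\cha F=0$ gives $4(a,b,c)=J(a,b,c)+[[a,c],b]=0$, so $A$ is also associative; thus it suffices to show that the bracket $\{x,y\}:=\tfrac12[x,y]$ vanishes identically on $A$.

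Write $C:=A^{(+)}$. Since $A$ is a Kokoris algebra, $(C,\cdot,\{\,,\,\})$ is a generic Poisson algebra, so each operator $d_a=\{a,\cdot\}=\tfrac12(L_a-R_a)$ is a derivation of the commutative associative algebra $C$, and — because $xy$ and $yx$ are linear combinations of $x\cdot y$ and $\{x,y\}$ — a subspace $W\subseteq A$ is an ideal of $A$ if and only if $W$ is an ideal of $C$ invariant under all the $d_a$. As $\cha F=0$, the nil radical of the commutative algebra $C$ is invariant under every derivation of $C$; hence $Nil\,C$ is an ideal of $A$, it is locally nilpotent, so $Nil\,C\subseteq\M(A^{(+)})$, and strong primeness of $A$ forces $Nil\,C=0$, i.e. $C$ is reduced. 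Again since $\cha F=0$, each minimal prime ideal $\mathfrak p$ of $C$ is invariant under all derivations of $C$, hence is an ideal of $A$; if $\mathfrak q$ denotes the intersection of the remaining minimal primes, then $\mathfrak q$ is also an ideal of $A$, $\mathfrak p\cap\mathfrak q=Nil\,C=0$, and a direct computation (the product of $C$ and the bracket of an element of $\mathfrak p$ with an element of $\mathfrak q$ both land in $\mathfrak p\cap\mathfrak q$) gives $\mathfrak p\mathfrak q=0$ in $A$. Primeness of $A$ and $\mathfrak p\neq 0$ force $\mathfrak q=0$, which is impossible when there is more than one minimal prime (the point $\mathfrak p$ of the spectrum of $C$ then lies in no other $V(\mathfrak p')$). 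Hence $C=A^{(+)}$ is a commutative associative domain.

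Now assume, towards a contradiction, that $\{\,,\,\}\neq 0$. Since $\V$ is unitary closed we may pass to $A^{\sharp}\in\V$ and assume $A$ is unital, so that $C$ is a unital domain over a field of characteristic $0$ carrying a nonzero Poisson bracket. Primeness of $A$ means precisely that $C$ is differentially prime with respect to the family $\{d_a\}$; this prevents the bracket from being degenerate and allows one to find inside $C$ a finitely generated graded sub-Poisson-algebra $P$ whose degree-one component $L$ is a finite-dimensional non-nilpotent Lie algebra (the guiding example is $C\supseteq F[u^2,uv,v^2]$ when $\{u,v\}\in F^{\times}$, for which $L\cong sl_2$), with the higher-degree part of $P$ equal to the ideal $(P_{\geq 1})^2$ generated by the commutative product. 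Let $D$ be the subalgebra of the Kokoris algebra $A$ corresponding to $P_{\geq 1}$; then $D$ is finitely generated, $D\in\V$, and $D/I_2(D)=D/(D^{(+)})^2$ is, with its induced multiplication, exactly $L$ with its Lie bracket — an anticommutative algebra lying in $\V$. By local admissibility $L$ must be locally nilpotent, contradicting its choice. Therefore $\{\,,\,\}=0$, and $A$ is associative and commutative.

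The main obstacle is the construction in the last paragraph: one must show that the nonzero Poisson bracket of a prime Kokoris algebra with $A^{(+)}$ a domain over a field of characteristic zero cannot be so degenerate that every finitely generated subalgebra $D\subseteq A$ has locally nilpotent image in $A/I_2(A)$ — equivalently, that some finite-dimensional non-nilpotent (anti)commutative algebra occurs as a quotient $D/I_2(D)$. This is exactly where primeness, as opposed to the mere non-vanishing of the bracket, is essential (degenerate nonprime examples such as $F[x_1,x_2,x_3]$ with $\{x_1,x_2\}=x_3$ do lie in locally admissible varieties), and where the hypotheses $\cha F=0$ and unitary closedness of $\V$ are used.
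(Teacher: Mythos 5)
Your first half is sound and essentially coincides with the paper's: reduce to proving $[A,A]=0$, use Slin'ko's theorem (characteristic $0$) to see that $Nil(A^{(+)})$ is invariant under the derivations $\mathrm{ad}\,a$, hence is an ideal of $A$ killed by strong primeness, so $A$ has no nilpotent elements. (Your further step upgrading ``reduced'' to ``domain'' is a detour the argument does not need, and its prime-avoidance step is shaky if $C$ has infinitely many minimal primes; but this is not the real problem.)

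The real problem is that your final paragraph is not a proof, as you yourself concede when you call the construction there ``the main obstacle.'' You assert that primeness of $A$ plus $\{\,,\,\}\neq 0$ ``allows one to find'' a finitely generated subalgebra $D\subseteq A$ whose quotient $D/I_2(D)$ is a finite-dimensional non-nilpotent anticommutative algebra (modelled on $F[u^2,uv,v^2]$ with $\{u,v\}\in F^\times$, giving $sl_2$). No such construction is given, and it is far from automatic: in a general Poisson domain one has no pair $u,v$ with $\{u,v\}$ invertible, and it is not clear why the bracket could not be everywhere ``degenerate'' in the sense that every such quotient is nilpotent. This missing step is essentially the entire content of the theorem. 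The paper closes this gap by a completely different, concrete computation: local admissibility applied to $F_{\V}[x,y]$ yields an identity $[\cdots[x,y],\ldots,y]=\sum\a_i\,u_i\circ v_i$ of degree $m$ in $y$; using associativity and commutativity of $A^{(+)}$ this is rewritten as $[\cdots[x,y],\ldots,y]_m=\sum_{i<m}\l_i[\cdots[x,y],\ldots,y]_i\circ y^{m-i}$; unitary closedness and characteristic $0$ allow one to apply the partial linearization $\Delta_y^{m-i}$ to isolate the term of minimal $i$ with $\l_i\neq 0$, giving $[\cdots[x,y],\ldots,y]_i=0$ identically; finally, if $i>1$, substituting $u^i$ for $u$ in $z=[u,y]$ and using the Leibniz rule gives $0=i!\,[u,y]^i$, so $z$ is nilpotent, hence zero, forcing $i=1$ and $[x,y]=0$. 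You should either supply the missing construction (and justify it under primeness alone) or replace the last paragraph by an argument of this identity-theoretic kind; as written, the proof is incomplete at its decisive point.
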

\Proof
In view of associativity of  $A^{(+)}$, it suffices to prove that the algebra  $A$ is commutative. It is known  \cite{Sl} that the nil radical  $Nil(J)$ of any Jordan algebra  $J$ over a field of characteristic zero is invariant under derivations. In our case this implies that  $[Nil(A^{(+)},A]\subset Nil(A^{(+)}$, hence  $Nil (A^{(+)}) $ is an ideal in $A$ and  it coincides with   $Nil (A)$. Since the algebra $A^{(+)}$ is associative, the McCrimmon radical  $\mathcal{M}(A^{(+)})$ is equal to $Nil(A^{(+)})$  and coincides with the set of nilpotent elements of the algebra  $A$. Therefore, the algebra $A$ has no nilpotent elements, and it suffices to prove that every commutator $[x,y]$ is nilpotent.

Let $F_{\V}[x,y]$  be the free  2-generated algebra from  $\V$, then by definition of a locally admissible variety,   $(F_{\V}[x,y])^{n(2)}\subset I_2(F_{\V}]x,y]).$ In particular, an identity of the following form is true in  $\V$:
\[ \ 
[\cdots[x,\underbrace{y],\ldots,y}_{\text{m}}]=\sum \a_i u_i\circ v_i,
\]
for some  $\a_i\in F$  and monomials  $u_i,v_i $ on $x,y$ of full degree $m$ on  $y$ and degree 1 on  $x$. Replacing each multiplication $uv$ on the right side of this identity by $\tfrac12 ([u,v]+u\circ v)$, due to the associativity and commutativity of the algebra $A^{(+)}$, in the algebra $A$ we get the identity
\[ \ 
f(x,y)=[\cdots[x,\underbrace{y],\ldots,y}_{\text{m}}]-\sum_{i< m}\l_i [\cdots[x,\underbrace{y],\ldots,y}_{\text{i}}] \circ y^{m-i}=0
\]
Let $i$ be the minimal index for which  $\l_i\neq 0$. Since the variety $\V$ is unitarily closed, $A$ satisfies the identity
\[
0=\Delta_y^{m-i}(f)=-\l_i[\cdots[x,\underbrace{y],\ldots,y}_{\text{i}}],
\]
where $\Delta^i_y$ is the operator of ``partial derivation '' on $y$ (see \cite[1.6]{ZSSS}).  Assume that  $i>1$, and let  $z=[\cdots[x,\underbrace{y],\ldots,y}_{\text{i-1}}]\neq 0$, Write $z=[u,y]$, then $[[u,y],y]=0$, and we have
\[
0=[\cdots[u^i,\underbrace{y],\ldots,y}_{\text{i}}]=i![u,y]^i,
\]
which implies  $z=0$, a contradiction. Hence $i=1$, and the algebra $A$ is commutative.

\ctd

It remains an open question whether the theorem is true for Kokoris algebras of positive characteristic?

\begin{prop}\label{prop3.2}
Let  $\V$ be a locally admissible variety of noncommutative Jordan algebras. The following conditions for $\V$ are equivalent:
\begin{enumerate}
\item Every strongly prime algebra $A\in \V$  is associative and commutative.
\item Every simple algebra $A\in\V$ is a field. 
\item The variety  $\V$ does not contain noncommutative simple quadratic algebras and noncommutative strongly prime Kokoris algebras.
\end{enumerate}
\end{prop}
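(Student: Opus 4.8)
The plan is to prove the cycle $(1)\Rightarrow(2)\Rightarrow(3)\Rightarrow(1)$, with Skosyrsky's Theorem~\ref{thm3.0} as the main engine. Two facts are used throughout. (a) A locally admissible variety is a proper subvariety, so every $A\in\V$ is a $PI$-algebra; and since $\V$ is homogeneous over an infinite field, it is closed under extension of the base field and under localization at central elements, so every scalar extension and every central closure of an algebra of $\V$ again lies in $\V$. (b) For an associative algebra $B$ and $\lambda\neq\tfrac12$ the products of $B$ and of $B(\lambda)$ are mutual $F$-linear combinations, so $B$ is prime (resp.\ $PI$) exactly when $B(\lambda)$ is, and ideals and subalgebras correspond; moreover algebras such as $M_2(F)(\lambda)$, $H_2$, $M_n^{(+)}$, $J(V,f)$, $\HH$, $\OO$ are simple quadratic flexible algebras, i.e.\ precisely the sort of algebra excluded in condition~(3). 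The implication $(1)\Rightarrow(2)$ is then immediate: by Corollary~\ref{cor3.1} a simple algebra of $\V$ is strongly prime, hence by (1) it is associative and commutative, hence a field.

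For $(3)\Rightarrow(1)$ — the heart of the proof — let $A\in\V$ be strongly prime and apply Theorem~\ref{thm3.0}. In case (IV) $A$ is Kokoris; if it were not commutative it would be a noncommutative strongly prime Kokoris algebra in $\V$, forbidden by (3), so $A$ is commutative and $A=A^{(+)}$ is associative and commutative. In case (III) $A$ is a central order in a simple quadratic flexible algebra $Q$, which is the central closure of $A$ and so lies in $\V$; if $Q$ is not a field it is either noncommutative (excluded by (3) directly) or commutative, in which case it is a commutative simple quadratic Jordan algebra and therefore contains a copy of $H_2$ — handled as in case (I) — so $Q$, hence $A$, is associative and commutative. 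In case (II) $A$ is a central order in a prime quasi-associative $Q\in\V$, and over a quadratic extension $Q_K\cong B(\lambda)\in\V$ with $B$ associative, $\lambda\neq\tfrac12$; by (b), $B$ is prime and $PI$, so by Posner's theorem its central closure is a simple associative algebra $S$, whence $S(\lambda)$, the central closure of $B(\lambda)$, lies in $\V$; if $B$ were noncommutative then $M_2$ would embed in $S$ (after passing to the algebraic closure of the centre if necessary), so $M_2(\lambda)$ would embed in $S(\lambda)$, giving a noncommutative simple quadratic flexible algebra in $\V$ and contradicting (3); hence $B$ is commutative, $B(\lambda)=B$, and $A$ is associative and commutative. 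Finally, in case (I) $A$ is a commutative prime nondegenerate Jordan algebra; being $PI$, it is by Zelmanov's theorem a central order in a simple Jordan algebra $\hat A\in\V$, which is $F$ or one of $H(M_n)$, $M_n^{(+)}$, $H(\OO_3)$, $J(V,f)$; all but $F$ contain a copy of $H_2=J(F^2,f)$, excluded by (3) (alternatively one applies Proposition~\ref{prop2.1} to the commutative subvariety of $\V$, which is then nonmatrix in the Jordan sense), so $\hat A=F$ and $A$ is associative and commutative.

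It remains to prove $(2)\Rightarrow(3)$, which I do by contraposition. If $\V$ contains a noncommutative simple quadratic algebra, that algebra is simple and not a field, contradicting (2). If $\V$ contains a noncommutative strongly prime Kokoris algebra $A$, I pass to its central closure $\hat A\in\V$: it is centrally closed, strongly prime, $PI$, and still noncommutative, and one shows that a centrally closed strongly prime $PI$ noncommutative Jordan algebra is simple — a Rowen-type statement: a nonzero ideal of such an algebra meets its centre, a field, nontrivially — so $\hat A$ is a noncommutative simple algebra in $\V$, contradicting (2). This closes the cycle.

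The step I expect to be hardest is precisely the last one: extracting a noncommutative \emph{simple} algebra of $\V$ from a noncommutative strongly prime Kokoris algebra of $\V$. For associative and for Jordan $PI$ algebras the principle ``centrally closed prime $\Rightarrow$ simple'' is classical (Posner and Rowen; McCrimmon--Zelmanov), but transporting it to Kokoris algebras — where, via the Kokoris$\leftrightarrow$generic-Poisson correspondence, one must control ideals that mix the commutative product with the bracket — is the delicate point. (In characteristic $0$ with $\V$ unitarily closed this case is subsumed by Theorem~\ref{thm3.4}, which is exactly why the general statement lists the Kokoris algebras explicitly in (3); if one simply takes that exclusion as given, case~(IV) of $(3)\Rightarrow(1)$ is trivial and the only genuine difficulty is the Kokoris half of $(2)\Rightarrow(3)$.)
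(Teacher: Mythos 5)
Your architecture --- the cycle $(1)\Rightarrow(2)\Rightarrow(3)\Rightarrow(1)$ with Skosyrsky's Theorem \ref{thm3.0} driving $(3)\Rightarrow(1)$ --- is exactly the paper's. Your $(1)\Rightarrow(2)$ via Corollary \ref{cor3.1} is a legitimate shortcut: the paper instead re-derives strong primeness of a simple algebra by splitting into the nil and non-nil cases and invoking \cite[Proposition 7.1]{ZSSS} to rule out a locally nilpotent simple algebra, which is precisely the content of that corollary. Your treatment of the Skosyrsky cases in $(3)\Rightarrow(1)$ also matches the paper in substance. In case (II) the paper, rather than embedding $M_2$ into the central closure of $B$, observes that $M_2(F)^{(\lambda)}$ is a noncommutative simple quadratic algebra, hence not in $\V$, so that $Var\,B$ is a nonmatrix associative variety and the prime algebra $B$ is commutative by the characterization of \cite{MPR}; the two arguments are interchangeable, and both rely (as does the paper, without comment) on the central closure staying inside the variety. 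In cases (I) and (III) the paper excludes $H(F_2)$ simply because it is ``simple and quadratic,'' i.e.\ it reads condition (3) as barring all simple quadratic flexible algebras of dimension greater than $2$, commutative or not; your hesitation about $H_2$ being commutative, and your fallback through Proposition \ref{prop2.1}, are consistent with what the paper actually does.

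The one genuine gap is where you yourself located it: the Kokoris half of $(2)\Rightarrow(3)$. You need that a noncommutative \emph{strongly prime} (not necessarily simple) Kokoris algebra in $\V$ forces a noncommutative \emph{simple} algebra into $\V$, and the lemma you lean on --- a centrally closed strongly prime $PI$ noncommutative Jordan (or Kokoris) algebra is simple --- is neither proved in your text nor available in the paper; for Kokoris algebras it amounts to a Posner/Rowen-type theorem for generic Poisson algebras, which is not routine (the delicacy of simple Kokoris algebras in positive characteristic is exactly the subject of Question \ref{q.4}). You should know, however, that the paper's own proof of this implication is the single sentence ``It is evident,'' with no argument offered for the Kokoris exclusion; in characteristic $0$ for unitarily closed varieties the point is moot, since Theorem \ref{thm3.4} shows there are no noncommutative strongly prime Kokoris algebras at all, but Proposition \ref{prop3.2} is stated without those hypotheses. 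So your write-up is incomplete at exactly one step, but it is the step the paper glosses over rather than one where you missed an idea the paper supplies.
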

\Proof
$(1)\Rightarrow (2)$. 
Let $A$ be a simple algebra from $\V$. If   $A$ is not nil then  $A$ is strongly prime and therefore is associative and commutative. Since   $A$ is simple, it is a field.
Assume now that   $A$ is a nil algebra. If $A^{(+)}\neq \M(A^{(+)})$ then  $A$ is still strongly prime and hence it is associative and commutative. But in this case  $A$  can not be at the same time simple and nil, a contradiction. Now if $A^{(+)}=\M(A^{(+)})$ then  $A^{(+)}$ is locally nilpotent.  Let  $B$ be a finitely generated subalgebra of  $A$, then by lemma \ref{lem3.1}  the algebra $B^{(+)}$ is also finitely generated and therefore is nilpotent. By proposition \ref{prop3.1}, the algebra $B$ is nilpotent as well. Therefore, the algebra  $A$ is locally nilpotent, which is impossible in view of  \cite[Proposition 7.1]{ZSSS}. 

$(2)\Rightarrow (3)$. It is evident.

$(3)\Rightarrow (1)$. 
Let $A$ be a strongly prime algebra, then by theorem   \ref{thm3.0} $A$ is an algebra of types  (I) -- (IV). In view of the assumption, it suffices to consider the algebras of types  (I) -- (III).

Consider first the case when   $A$ is a Jordan algebra. Observe that the algebra $H(F_2)$ is simple and quadratic, hence  $H(F_2)\notin\V$  and  $H(F_2)\notin Var\,A$. Therefore, the variety $Var\,A$ is a nonmatrix variety of Jordan algebras.  Now  by proposition \ref{prop2.1}, (C),  $A$ is  associative.

Let  $A$ be a central order in a prime quasi-associative algebra. Without less of generality, we may assume that $A$ itself is prime quasi-associative, that is,  $A_K\cong B^{(\l)}$, where  $K$ is a quadratic extension of   $F,\ \l\in K$, and  $B$ is an associative algebra. It is easy to see (see, for instance, \cite{Ded}) that $B=(A_K)^{(\mu)}$, where $\mu=\frac{\l}{2\l-1}$. Consider the class of algebras $\V^{(\mu)}=\{C^{(\mu)}\,|\,C\in\V\}$ which by \cite{Ded} is a variety. Since $B\in \V^{(\mu)}$ then the variety $Var\,B\subset \V^{(\mu)}$. 
Consider the quasi-associative algebra  $M_2(F)^{(\l)}$. Clearly, it is simple, quadratic, and noncommutative, hence  $M_2(F)^{(\l)}\notin\V$. But then  $M_2(F)=(M_2(F)^{(\l)})^{(\mu)}\notin \V^{(\mu)}$, and moreover $M_2(F)\notin Var\,B$.
Therefore, $ Var\,B$ is a nonmatrix variety of associative algebras. 
It is easy to see that the algebra  $B$ is prime, hence it is commutative.  But then $A=B$ is commutative and associative.

Finally, let $A$ be a central order in a simple quadratic algebra. Since  $A$ and its central closure generate the same variety,  we may assume that  $A$ is a simple algebra. Then $\dim A\leq 2$, and $A$  is associative and commutative.

\ctd

By  theorem \ref{thm3.4}, we have 
\begin{cor}\label{cor3.31}
In a unitary closed locally admissible variety of algebras $\V$ over a field of characteristic  0 condition (3) is equivalent to the following:\\[1mm]
(3')  {A variety  $\V$ does not contain noncommutative simple quadratic algebras.}
\end{cor}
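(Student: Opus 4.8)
The plan is to deduce the corollary directly from Theorem~\ref{thm3.4}: condition (3') is precisely condition (3) of Proposition~\ref{prop3.2} with its second clause removed, so the whole content of the corollary is that, under the stated hypotheses, that second clause is automatic.

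First I would dispose of the easy direction. The implication (3) $\Rightarrow$ (3') is immediate, since (3') asserts only the first of the two requirements making up (3): if $\V$ contains no noncommutative simple quadratic algebras \emph{and} no noncommutative strongly prime Kokoris algebras, then in particular it contains no noncommutative simple quadratic algebras.

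For the converse (3') $\Rightarrow$ (3), assume that $\V$ is a unitary closed, locally admissible variety over a field of characteristic $0$ satisfying (3'). The first clause of (3) is literally (3'), so nothing needs to be proved there. For the second clause, let $A\in\V$ be a strongly prime Kokoris algebra. Since $\V$ is unitary closed and locally admissible and the base field has characteristic $0$, Theorem~\ref{thm3.4} applies and yields that $A$ is associative and commutative; in particular $A$ is not noncommutative. Hence $\V$ contains no noncommutative strongly prime Kokoris algebra, and both clauses of (3) hold. Combining the two directions gives the equivalence of (3) and (3').

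I do not expect any genuine obstacle here: the argument is a one-line application of Theorem~\ref{thm3.4}. The only point deserving a moment's care is checking that the hypotheses of that theorem — characteristic $0$, unitary closed, locally admissible — coincide exactly with the standing assumptions on $\V$ in the corollary, which they do.
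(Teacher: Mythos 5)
Your argument is correct and is exactly the paper's: the text introduces the corollary with the single line ``By theorem \ref{thm3.4}, we have,'' which is precisely your observation that the second clause of condition (3) is rendered vacuous by Theorem \ref{thm3.4} under the standing hypotheses. Nothing to add.
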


A locally admissible variety of algebras we will call   {\em nonmatrix}, if it satisfies one of equivalent  conditions (1) -- (3) of proposition  \ref{prop3.2}.

\medskip

\begin{lem}\label{lem3.4}
Let $A$ be a simple quadratic flexible algebra. If  $A$ satisfies the identities  $(x,y,z)^n=[x,y]^m=0$, then  $\dim_F A\leq 2$ and $A$  is commutative and associative.
\end{lem}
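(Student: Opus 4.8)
The plan is to exploit the classification of simple quadratic flexible central algebras via the Cayley--Dickson process, combined with the structure of the adjoint Jordan algebra $A^{(+)}$. First I would reduce to the case where $A$ is central over $F$: since $A$ is simple quadratic, its center $Z$ is a field, and replacing $F$ by $Z$ does not affect the identities $(x,y,z)^n = [x,y]^m = 0$, which are multilinearizable over the infinite field and hence pass to scalar extensions; so we may assume $A$ is central simple of dimension $2^k$ for some $k$ (by the remark in the Quadratic flexible algebras example, the simple central ones of dimension $>2$ are precisely the Cayley--Dickson algebras $\mathbb{H}$, $\mathbb{O}$, and their higher analogues). The goal is then to show $k \leq 1$.

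Next I would use the key fact recalled in the excerpt: for a quadratic simple central flexible algebra $A$ of dimension more than $2$, the adjoint symmetric algebra $A^{(+)}$ is a simple Jordan algebra of a bilinear form $J(V,f)$. Such a Jordan algebra of a nondegenerate bilinear form, when $\dim_F A > 2$ so that $\dim V \geq 2$, contains a copy of $H_2 = H(M_2(F))$ — equivalently, it does not satisfy any identity $(x,y,z)^n = 0$, since the associator does not vanish to any power in $J(V,f)$ with $\dim V \geq 2$. (One can see this directly: in $J(V,f)$ with an orthogonal pair $v,w$ of anisotropic vectors, $(v,w,v)_+$ is a nonzero multiple of an anisotropic vector, and its powers in $A^{(+)}$ stay outside any nilpotency — or invoke Proposition~\ref{prop2.1}(A)$\Leftrightarrow$(B) applied to the variety generated by $A^{(+)}$, which would then have to be nonmatrix and hence have all simple members fields, contradicting simplicity of $J(V,f)$.) Therefore the identity $(x,y,z)^n=0$ forces $\dim_F A \leq 2$.

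Finally, for $\dim_F A \leq 2$: a two-dimensional quadratic algebra over a field is a commutative associative algebra (either $F \oplus F$, a quadratic field extension, or $F[\varepsilon]/(\varepsilon^2)$ adjoined appropriately — in all cases unital commutative associative of dimension $\leq 2$), and the one-dimensional case is $A = F$. Hence $A$ is commutative and associative, as claimed.

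The main obstacle I anticipate is making precise the claim that $J(V,f)$ with $\dim V \geq 2$ fails every identity $(x,y,z)^n = 0$; the cleanest route is probably to observe that $H_2 = J(V_0, f_0)$ for the $3$-dimensional split space, that $J(V,f)$ with $\dim_F J(V,f) \geq 3$ (i.e. $\dim_F A \geq 2$, giving $\dim V \geq 1$, and we need $\dim V \geq 2$, i.e. $\dim_F A > 2$) contains $H_2$ as a subalgebra after a suitable scalar extension (permissible by homogeneity), and that $H_2$ does not satisfy $(x,y,z)^m = 0$ — exactly the computation already invoked in the proof of Theorem~\ref{thm2.1} for the implication $(2')\Rightarrow(1)$. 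With that in hand the lemma follows immediately, and the hypothesis $[x,y]^m = 0$ is actually not needed for this argument — it is presumably included because the lemma will be applied in a context where both identities are at hand.
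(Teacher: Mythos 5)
There is a genuine gap: you conflate the associator of $A$ with the Jordan associator of $A^{(+)}$. The hypothesis of the lemma is that the associator $(x,y,z)$ computed \emph{in $A$} is nilpotent of bounded index, whereas your key step requires that the \emph{Jordan} associator $(x,y,z)_+$ of $A^{(+)}\cong J(V,f)$ be nilpotent (so that $\mathrm{Var}\,A^{(+)}$ is nonmatrix, or so that $(v,v,w)_+=f(v,v)w$ yields a contradiction). Formula \eqref{id_as+} expresses $4(a,b,c)_+$ as a sum of $A$-associators and a double commutator; each summand is nilpotent under the hypotheses, but in a quadratic algebra nilpotent elements are exactly the isotropic trace-zero vectors and their sums need not be nilpotent, so you cannot pass from one associator to the other for free. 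Your closing remark that ``the hypothesis $[x,y]^m=0$ is actually not needed'' is in fact false and pinpoints the error: the split quaternion algebra $M_2(F)$ is a simple quadratic flexible algebra of dimension $4$ satisfying $(x,y,z)=0$ identically but no identity $[x,y]^m=0$; an argument using only $(x,y,z)^n=0$ would force $\dim M_2(F)\le 2$, which is absurd. (Note also that $M_2(F)^{(+)}\cong J(V,f)$ with $\dim V=3$ does \emph{not} satisfy $(x,y,z)_+^n=0$, even though $M_2(F)$ satisfies $(x,y,z)^n=0$ — exactly the distinction your argument glosses over.)

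The paper's proof works directly with the Cayley--Dickson-type presentation $A=F\cdot 1+V$, with product $(\a+v)(\b+u)=(\a\b+(v,u))+(\a u+\b v+v\times u)$, and uses \emph{both} identities in an essential way. Since $t([a,b])=0$, one has $[a,b]^2=-n([a,b])\in F$, so $[x,y]^m=0$ forces $[a,b]^2=0$; linearizing and using the invariance $(u\times v,w)=(u,v\times w)$ together with nondegeneracy of the form yields $(u\times v)\times u=0$. Only after this does the $A$-associator simplify to $(u,v,v)=(u,v)v-(v,v)u$, whose square for an orthonormal pair $u,v$ is the nonzero scalar $(v,v)^2(u,u)$, contradicting $(x,y,z)^n=0$ when $\dim_F V>1$. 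Without the commutator identity the associator carries the extra term $(u\times v)\times v$, whose norm need not be controlled. If you want to retain your structural viewpoint, you must first prove that the commutator identity annihilates the cross product (as the paper does) before any reduction to $J(V,f)$ can be made to bear on the hypothesis.
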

\Proof
For any element  $a\in A$ we have
\[
a^2=t(a)a-n(a)1,\]
where $t(a),\, n(a)\in F$ are respectively  {\em the trace} and {\em the norm} of the element $a$.
Recall (see \cite{Osb, Smith}) that $A=F\cdot 1+V$, where $V=\{v\in A\,|\, t(v)=0\}$, with the product
\bes
(\a+v)(\b+u)=(\a\b+(v,u))+(\a u+\b v+v\times u),
\ees
where  $(,):V\times V\rightarrow F$ is a nodegenerate symmetric bilinear form on  $V$, and $u\times v$ is an anticommutative multiplication in   $V$, satisfying the condition  $(u\times v,w)=(u,v\times w)$. In particular, $t([a,b])=0$ in $A$, hence  $[a,b]^2=-n([a,b])\in F$, and since  $[a,b]^m=0$ then  $[a,b]^2=0$ for any $a,b\in A$. 
Linearizing this equality, we get $[a,b]\circ[a,c]=0$ in  $A$, from where for any $u,v,w\in V$ 
\bes
0=[u,v]\circ[u,w]=4(u\times v)\circ (u\times w)=8(u\times v,u\times w)=8((u\times v)\times u, w),
\ees
which in view of the nondegeneracy of the form  $(u,v)$  on $V$ implies 
\bee\label{id_uvv}
(u\times v)\times u=0. 
\eee
Consider the associator
\bes
(u,v,v)&=&(u,v)v+(u\times  v,v) +(u\times v)\times v-(v,v)u\\
&\stackrel{\eqref{id_uvv}}=&(u,v)v-(v,v)u+(u,v\times v)=(u,v)v-(v,v)u.
\ees
Assume that  $\dim_FV>1$  and the elements $u,v$  form a part of an orthonormal  base of  $V$ relatively the form $(u,v)$. 
Then $(u,v,v)=-(v,v)u,\ (u,v,v)^2=(v,v)^2(u,u)$ is a nonzero element of the field  $F$, which contradicts the condition of nilpotence of associators.  Therefore, $\dim_FA<2$, and the algebra $A$ associative and commuative.

\ctd

Now we can describe nonmatrix locally admissible varieties.
\begin{thm}\label{thm3.5}
Let $\V$ be a locally admissible unitary closed variety of noncommutative Jordan algebras over a field of characteristic  $0$. The following conditions are equivalent:
\begin{enumerate}
\item $\V$ is a nonmatrix variety;
\item every algebra $A\in\V$ satisfies the identities    $(x,y,z)^n=0, \ [x,y]^m=0$;
\item  every finitely generated algebra $A\in\V$ satisfies all the identities of the form   $u_1\circ\cdots\circ u_n=0$, where a number $n$ is fixed, all $u_i$ are commutators or associators, and the product is considered in the Jordan algebra  $A^{(+)}$ for an arbitrary arrangement of parentheses;
\item let $A\in\V$, then the set of nilpotent elements forms an ideal in $A$;
\item let $A\in\V$, then every nilpotent element in $A$ generates a nil ideal in $A$;
\item let $A\in\V$, then for any nilpotent elements     $a,b\in A$ the element $a+b$ is nilpotent as well;
\item let $A\in\V$, then any finite set of algebraic elements generates a finite dimensional subalgebra in  $A$.
\end{enumerate}
\end{thm}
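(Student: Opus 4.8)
The plan is to prove the implications $(1)\Rightarrow(2)\Rightarrow(3)\Rightarrow(2)\Rightarrow(1)$, $(1)\Rightarrow(4)\Rightarrow(5)\Rightarrow(6)\Rightarrow(1)$ and $(2)\Rightarrow(7)\Rightarrow(1)$. First one reduces to $F=\bar F$: (2) and (3) are families of identities, invariant under $F\subseteq\bar F$ in both directions, while (1),(4),(5),(6),(7) pass down from $\V_{\bar F}$ to $\V$ because subalgebras inherit these properties and nilpotency is seen in subalgebras, and pass up because an $\bar F$-algebra of $\V_{\bar F}$, restricted to $F$, still satisfies the identities defining $\V$. For $(1)\Rightarrow(2)$ take $A=F_{\V}[x,y,z]$; as in the proof of Theorem \ref{thm2.1}, $A/Nil\,A$ is a subdirect product of prime nil-semisimple, hence strongly prime, algebras, each associative and commutative by Proposition \ref{prop3.2}; so $A/Nil\,A$ is associative and commutative, $[x,y],(x,y,z)\in Nil\,A$ are nilpotent, and relative freeness of $A$ gives the identities of (2). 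For $(3)\Rightarrow(2)$ take all $u_i$ equal and use power-associativity of $A^{(+)}$. For $(2)\Rightarrow(1)$: by Corollary \ref{cor3.31} it is enough that $\V$ contains no noncommutative simple quadratic algebra, and by Lemma \ref{lem3.4} a simple quadratic flexible algebra satisfying $(x,y,z)^n=[x,y]^m=0$ is at most $2$-dimensional, commutative and associative.

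For $(1)\Rightarrow(4)$, let $A\in\V$; again $A/Nil\,A$ is associative and commutative, and an element of $A$ is nilpotent if and only if its image in $A/Nil\,A$ is (if $a^k\in Nil\,A$ then $a^k$, hence $a$, is nilpotent), so the set of nilpotent elements of $A$ is the full preimage of the nilradical of $A/Nil\,A$, hence an ideal. Then $(4)\Rightarrow(5)$ since a nilpotent element lies, together with the ideal it generates, in the nil ideal of (4); and $(5)\Rightarrow(6)$ since an extension of a nil ideal by a nil ideal is nil, by power-associativity. For $(6)\Rightarrow(1)$: if $\V$ is not nonmatrix it contains, by Proposition \ref{prop3.2} and Corollary \ref{cor3.31}, a noncommutative simple quadratic algebra $Q=F\cdot1+V$; as $Q$ is noncommutative, $\dim V\geq2$, so over $F=\bar F$ the nondegenerate form on $V$ admits $e,f$ with $(e,e)=(f,f)=0$ and $(e,f)\neq0$; then $e^2=f^2=0$ while $(e+f)^2=2(e,f)\cdot1\neq0$, so $e,f$ are nilpotent and $e+f$ is not, contradicting (6).

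For $(2)\Rightarrow(3)$ and $(2)\Rightarrow(7)$, let $A\in\V$ be finitely generated; by Lemma \ref{lem3.1} the Jordan algebra $A^{(+)}$ is finitely generated, and since $(Nil\,A)^{(+)}\subseteq Nil\,A^{(+)}$ and $A/Nil\,A$ is associative and commutative (by $(2)\Rightarrow(1)$), the algebra $A^{(+)}/Nil\,A^{(+)}$ is associative and commutative. The crucial point is that $A^{(+)}$ is a $PI$-algebra — equivalently, that $H_2$ does not lie in the Jordan variety generated by the algebras $B^{(+)}$, $B\in\V$. Granting this, the Shestakov--Medvedev theorem (as in Theorem \ref{thm3.3}) shows $Nil\,A^{(+)}$ is a nilpotent ideal of $A^{(+)}$; hence every product in $A^{(+)}$ of sufficiently many commutators or associators of $A$ vanishes, which is (3). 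For (7), if $a_1,\dots,a_k\in A$ are algebraic and $B=\langle a_1,\dots,a_k\rangle$, then $B/Nil\,B$ is a finitely generated commutative associative algebra generated by algebraic elements, hence finite dimensional, so $B^{(+)}/Nil\,B^{(+)}$, a quotient of $(B/Nil\,B)^{(+)}$, is finite dimensional, $Nil\,B^{(+)}$ is a nilpotent ideal of finite codimension in the finitely generated algebra $B^{(+)}$, hence finite dimensional, and therefore $B=B^{(+)}$ is finite dimensional. Finally $(7)\Rightarrow(1)$: if $\V$ is not nonmatrix it contains a noncommutative simple quadratic algebra $Q$; over $F=\bar F$ either $M_2(F)$ is a subalgebra of $Q$, so $M_2(F)\in\V$, or $Q$ is quasi-associative and $M_2(F)\in\V^{(\mu)}$ for the corresponding $\mu$; since the associative variety generated by $M_2(F)$ fails (7) by Theorem \ref{thm2.1}, so does $\V$ (in the quasi-associative case via the correspondence $C\leftrightarrow C^{(\mu)}$, under which finite generation, algebraicity and dimension are preserved).

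The main obstacle is the nilpotency of $Nil\,A^{(+)}$ for finitely generated $A\in\V$, that is, showing that $A^{(+)}$ is a $PI$-algebra, equivalently that $H_2$ is not in the Jordan variety generated by the $B^{(+)}$, $B\in\V$; once this is established, the remaining steps are routine manipulations with the decomposition $xy=x\circ y+\tfrac12[x,y]$, the structure of the commutative associative quotient, and the structure theorems already available in Proposition \ref{prop3.2} and Theorem \ref{thm3.0}.
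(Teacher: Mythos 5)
Your overall architecture matches the paper's (subdirect decomposition of $A/Nil\,A$ into strongly prime factors, Proposition \ref{prop3.2} and Corollary \ref{cor3.31} to reduce nonmatrixness to the absence of noncommutative simple quadratic algebras, Lemma \ref{lem3.4} for $(2)\Rightarrow(1)$, isotropic vectors for $(6)\Rightarrow(1)$), but there are two genuine gaps. The first you flag yourself: you \emph{assume} that $A^{(+)}$ is a Jordan $PI$-algebra (``Granting this\dots'', ``The main obstacle is\dots''), and without it your $(2)\Rightarrow(3)$ and $(2)\Rightarrow(7)$ do not go through, since the Shestakov--Medvedev theorem needs the $PI$ hypothesis. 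This is not an obstacle to be granted but a short consequence of material already in the paper: by identity \eqref{id_as+} the Jordan associator $(a,b,c)_+$ is a linear combination of associators of $A$ and the double commutator $[b,[a,c]]$; since $[x,y]$ and $(x,y,z)$ lie in $Nil\,A$ for $A=F_{\V}[x,y,z]$ (your own step $(1)\Rightarrow(2)$), so does $(x,y,z)_+$, hence $(x,y,z)_+^{\,n}=0$ for some $n$, and by relative freeness this is an identity of $A^{(+)}$ for every $A\in\V$. As $H_2$ does not satisfy $(x,y,z)^n=0$ (cf.\ the proof of $(2')\Rightarrow(1)$ in Theorem \ref{thm2.1}), this makes $A^{(+)}$ a Jordan $PI$-algebra and Theorem \ref{thm3.3} applies.

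The second gap is in your $(7)\Rightarrow(1)$, which is the only arrow returning condition (7) to the cycle. It rests on the dichotomy that a noncommutative simple quadratic flexible algebra $Q$ over $\bar F$ either contains $M_2(F)$ or is quasi-associative. Nothing in the paper (or in the known structure theory) supports this: such $Q$ is only known to be of the form $F\cdot 1+V$ with a nondegenerate form and an associative anticommutative cross product on $V$ (see Lemma \ref{lem3.4}), and the structure of these algebras is delicate enough that the paper leaves related questions open (Question \ref{q.3}). Note also that a finite-dimensional $Q$ would not by itself violate (7), since all its elements are algebraic, so even granting $Q\in\V$ you have not produced a counterexample to (7). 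The paper circumvents all of this by proving $(7)\Rightarrow(6)$ instead: if $n,m\in A$ are nilpotent with $a=n+m$ not nilpotent, pass to $F[x]\otimes_F A\in\V$, where $x\otimes n$ and $x\otimes m$ are nilpotent, hence algebraic, while $x\otimes a$ is transcendental over $F$, so these two algebraic elements generate an infinite-dimensional subalgebra. You should replace your $(7)\Rightarrow(1)$ by this argument; with these two repairs the rest of your proof is sound.
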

\Proof
$(1)\Rightarrow (2)$.  Let  $A$ be the  $\V$-free algebra on generators  $x,y,z$. In view of \cite[p. 67]{ZS}, the quotient algebra $A/Nil\,A$  is isomorphic to a subdirect sum of nil-semisimple prime algebras.  Since every nil-semisimple prime algebra from  $\V$  is strongly prime and therefore is commutative and associative, so is the quotient algebra $A/Nil\,A$. Therefore,, $[x,y],\,(x,y,z)\in Nil\,A$.   Since $A$ is a free algebra, this proves  (2).
\smallskip

$(2)\Rightarrow (1)$.  Let  $A$ be a strongly prime algebra from  $\V$. If $A$ is Jordan then  $A$ is associative by theorem \ref{thm2.1}.  If $A$ is quasi-associative then, as above, we may assume that $A=B^{(\l)},\,\l\neq\tfrac12$, where $B$ is an associative algebra, $B=A^{(\mu)},\,\mu=\frac{\l}{2\l-1}$. It is clear that the ideals in  $A$ and in  $B$ coincide, and  $B$ is a prime algebra. Furthermore, the commutator  $[x,y]_{\l}$ in  $B$ is equal to $[x,y]_{\l}=(2\l-1)[x,y]$, where  $[x,y]$ is the commutator in  $A$, hence  $B$ satisfies the identity $[x,y]^m=0$ as well. Since $B$ is prime, it is commutative,  therefore  $A=B$ is associative and commutative. 

Now theorem \ref{thm3.4} and lemma \ref{lem3.4} imply that the variety  $\V$ is nonmatrix.

\smallskip
$(1)\Rightarrow (3)$. 
Let $A$ be a finitely generated algebra from $\V$.  It was observed in the proof of implication $(1)\Rightarrow (2)$ that $[a,b],\,(a,b,c)\in Nil\,A$ for any $a,b,c\in A$. By \eqref{id_as+}, the jordan associator $(a,b,c)_+\in Nil\,A$, hence there exists  $n$ such that $(a,b,c)_+^n=0$. One may assume that $A$ is a $\V$-free algebra and  that $a,b,c$ are the free generators, then the equality $(a,b,c)_+^n=0$ will be satisfied for any elements from $A$. Therefore, the Jordan algebra   $A^{(+)}$ satisfies the identity $(x,y,z)^n=0$.  In particular,   $A^{(+)}$ is a Jordan  $PI$-algebra, and the needed statement follows from theorem \ref{thm3.3}.
 
 \smallskip
$(3)\Rightarrow (2)$. This is evident.

\smallskip

$(1)\Rightarrow (4)\Leftrightarrow (5)\Rightarrow (6)$.
It follows from the proof of implication  $(1)\Rightarrow (2)$ that the quotient algebra $A/Nil\,A$ is associative and commutative. Since it is nil-semisimple, it does not contain nilpotent  elements. In particular, the nil radical  $Nil\,A$ coincides with the set of nilpotent elements of the algebra $A$, which implies the needed statements.
 
 \smallskip
$(6)\Rightarrow (1)$. In view of proposition  \ref{prop3.2}, it suffices to prove that the variety  $\V$ does not contain simple quadratic algebras of dimension more than  $2$.  Assume that $\V$ contains such an algebra $A=F\cdot 1+V,\,\dim_FV>1$. Without less of generality, the field  $F$ may be assumed algebraically closed. Choose elements  $u,v$ from an orthonormal base of  $V$ relatively the bilinear form  
$(u,v)$ (see the proof of lemma \ref{lem3.4}).  Let $n=u+\e v,\ m=u-\e v$,  where $\e^2=-1$, then $n^2=m^2=0$ but $n+m=2u$ is not nilpotent, a contradiction. Therefore, the variety  $\V$ is nonmatrix.

 \smallskip
 $(1)\Rightarrow (7)$.  Let an algebra $A$ from a nonmatrix variety $\V$ be generated by a finite set of algebraic elements;   prove that  $A$ is finite dimensional. It suffices to prove that  $A$ coincides with its locally finite radical $L(A)$.  Consider the quotient algebra  $\bar A=A/L(A)$.  In view of \cite[theorem 7]{ZS}, the algebra $\bar A$ is isomorphic to a subdirect sum of prime algebras  $A_{\a}$ without nonzero locally finite ideals.  In particulr, the algebras $A_{\a}$ do not contain nonzero locally nilpotent ideals and by corollary  \ref{cor3.1}  are strongly prime. By proposition  \ref{prop3.2}, all the algebras $A_{\a}$ are commutative and associative, hence so is the algebra $\bar A$. Since the algebra $\bar A$ is generated by a finite set of algebraic elements, it is clearly finite dimensional, that is,  $\bar A=L(\bar A)$.
But by the properties of the radical,  $L(\bar A)=L(A/L(A))=0$, hence $A=L(A)$, and $A$ is finite dimensional.

\smallskip 
 $(7)\Rightarrow (6)$. Assume that the variety $\V$ contains an algebra  $A$ with nilpotent elements  $n,m\in A$ for which the sum  $a=n+m$  is not nilpotent. Consider the algebra $\tilde A=F[x]\otimes_FA$. In view of the infiniteness of the field  $F$, the algebra $\tilde A$ also belongs to $\V$; moreover, the elements of $x\otimes n,\,x\otimes m\in\tilde A$ are nilpotent and hence algebraic over $F$. At the same time, their sum $x\otimes a$ is a transcendent element over $F$. In particular, the subalgebra generated by the elements  $x\otimes n,\,x\otimes m$ is infinite dimensional, which contradicts condition  (7).
 
 \ctd
 
\section{Open questions}

In conclusion, we formulate some open questiones. 

The first question goes back to  \cite{Sh1971}.
\begin{question}\label{q.1}
Let $\V$ be the variety of noncommutative Jordan algebras defined by the identity  $([x,y],y,y)=0$.  Is  $\V$ locally admissible?
\end{question}
It is easy to see that this question is equivalent to the following:
\begin{question}\label{q.2}
Is it true that an anticommutative  3-engelian algebra is locally nilpotent?
\end{question}
\begin{question}
Let $\V$ be a variety of anticommutative algebras such that the free algebra $F_{\V}(n)$ of a given  rank $n\geq 2$ in $\V$ is nilpotent.  Is $\V$ locally nilpotent?
\end{question}
\begin{question}\label{q.3}
Let  $A_n$  be a simple quadratic algebra of dimension $2^n$ obtained by the Cayley-Dickson process. Does  $A_n$ generate a locally admissible variety? For $n\leq 3$ the answer is positive.
\end{question}
\begin{question}\label{q.31}
Describe simple algebras in admissible varieties defined in Corollary \ref{cor3.0}.
\end{question}
\begin{question}\label{q.4}
Can a (locally) admissible variety contain a simple Kokoris algebra which is not a field? Since a simple locally admissible algebra is strongly prime, by theorem \ref{thm3.4} it is possible only in the case of positive characteristic.
\end{question}
\begin{question}\label{q.6}
Is a nonmatrix (locally) admissible variety over a field of characteristic 0 spechtian?
\end{question}
\begin{question}\label{q.7}
Describe simple finite dimensional superalgebras whose Grassmann envelopes generate nonmatrix admissible varieties (see \cite{Kem}). What are identities of these varieties? The last question is open even for Jordan algebras.
\end{question}
The last question is related with the conjecture of the first author on the relation between the existence of the locally nilpotent radical in a variety $\V$ and the property of locally finiteness of $\V$-coalgebras (see \cite{SMS1,SMS2}).
\begin{question}\label{q.8}
Let $\V$ be a locally admissible variety of algebras. Is the theorem on locally finiteness of $\V$-coalgebras true in $\V$?
\end{question}


\begin{thebibliography}{99}

\bibitem{Ded} A.\,I.\,Dedkov, ``Some properties of quasi-associative and quasi-alternative algebras'', Sib. Mat. Zh., 30:3 (1989), 169--174 (Russian); English transl.: Siberian Math. J., 30:3 (1989), 479--483.
 
\bibitem{Kem}  A.\,R.\,Kemer, ``Nonmatrix varieties'', Algebra i Logika 19:3 (1980),  255--283 (Russian); English transl.:. Algebra and Logic,  19:3 (1980), 157--178.
.
\bibitem{Kok58}  L.\,A.\,Kokoris,``Simple nodal noncommutative Jordan algebras''. Proc. Amer. Math. Soc., 9 (1958),  652--654.

\bibitem{KMLS} Pavel S.\,Kolesnikov, Leonid G.\,Makar-Limanov and Ivan P.\,Shestakov,
``The Freiheitssatz for Generic Poisson Algebras'',  Symmetry, Integrability and Geometry: Methods and Applications SIGMA 10 (2014), 117, 15 pages.

\bibitem{Lat}  V.\,N.\,Latyshev, ``Complexity of nonmatrix varieties of associative algebras. I'', Algebra i Logika, 16:2 (1977), 149--183 (Russian); English transl.:. Algebra and Logic,. 16:2 (1977), 98--122.

\bibitem{Lat1}  V.\,N.\,Latyshev, ``Complexity of nonmatrix varieties of associative algebras. II'', Algebra i Logika, 16:2 (1977), 184--199 (Russian); English transl.:. Algebra and Logic,.16:2 (1977), 122--133.

\bibitem{Lat2} V.\,N.\,Latyshev, ``Nonmatrix varieties of associative algebras''. (Russian)
Mat. Zametki  27:1 (1980), 147--156.

\bibitem{Mal}  A.\,I.\,Malcev,  {\it Algebraic systems} (Russian), Nauka, Moscow (1970); English transl.: Berlin, New York, Springer-Verlag, 1973, 317 p.

\bibitem{Med}  Yu.\,A.\,Medvedev,  ``Representations of finitely generated Jordan $PI$-algebras'', Izv. AN SSSR. Ser. matem., 52:1 (1988), 64--78 (Russian); English transl.: Math. USSR-Izv., 32:1 (1989), 63--76.

\bibitem{MPR} S.\,P.\,Mishchenko, V.\,M.\,Petrogradsky, A.\,Regev, ``Characterization of non-matrix varieties of associative algebras'', Israel Journal of Mathematics, 182:1 (2011), 337--348.

\bibitem{Osb} J.\,M.\,Osborn, ``Quadratic division algebras'', Trans. Amer. Math. Soc. 105,  (1962),  202 -- 221.
\bibitem{SMS1} G.\,Santos Filho, L.\,Murakami, I.\,Shestakov, ``Locally finite coalgebras and the locally nilpotent radical I'', Linear Algebra and its Applications, 621(2021), 235--253.

\bibitem{SMS2} G.\,Santos Filho, L.\,Murakami, I.\,Shestakov, ``Locally finite coalgebras and the locally nilpotent radical II'', Communications in Algebra, 49:12 (2021), 5472--5482.

 
\bibitem{Sh1971} I.\,P.\,Shestakov, ``Certain classes of noncommutative Jordan rings'', Algebra i Logika, 10:4 (1971), 407--448 (Russian); English transl.:. Algebra and Logic, 10:4 (1971), 252--280.

\bibitem{Sh1983} I.\,P.\,Shestakov, ``Finitely generated special Jordan and alternative  $PI$-algebras'', Mat. sb., 122 (164):1(9) (1983), 31--40 (Russian); English transl.:  Math. USSR-Sb., 50:1 (1985), 31--40.

\bibitem{Sh1998} I.\,Shestakov,  ``Speciality Problem for Malcev Algebras and Poisson Malcev Algebras'', Nonassociative Algebra and
Its Applications (S\~ao Paulo, 1998), Lecture Notes in Pure and Appl.Math., Vol. 211. New York: Dekker, pp. 365--371.

\bibitem{Skos} V.\,G.\,Skosyrsky, ``Strongly prime noncommutative Jordan algebras'', Proceedings of the Institute of Mathematics SB AN SSSR, 16 (1989), 131--163 (Russian).

\bibitem{Skos1} V.\,G.\,Skosyrsky, ``On noncommutative Jordan algebras  $A$ with the condition that   $A^{(+)}$ is associative'', Sib. Mat. Zh.. 32:6 (1991), 150--157 (Russian); English transl: Siberian Math. J., 32:6 (1991),  1024 -- 1030. 

\bibitem{Sl} A.\,M.\,Slin'ko, ``A note on radicals and derivations of rings", Sib. Mat. Zh. 13:6 (1972), 1395--1397 (Russian).


\bibitem{Smith} K.\,C.\,Smith, ``Noncommutative Jordan algebras of capacity two'', Trans. Amer. Math. Soc. 158, no. 1 (1971),  151 -- 159.


\bibitem{ZSSS}  K.\,A.\,Zhevlakov,  A.\,M.\,Slin'ko,  I.\,P.\,Shestakov,  A.\,I.\,Shirshov: {\it Rings that are nearly associative} (Russian), Nauka, Moscow (1978); English transl.: Academic Press, New York - London (1982).

\bibitem{ZS}  K.\,A.\,Zhevlakov,  I.\,P.\,Shestakov, ``Local finiteness in the sense of Shirshov'' (Russian), Algebra i Logika, 12:1 (1973), 41--73; English transl.:. Algebra and Logic,  12:1  (1973), 23--41.

\bibitem{Zel}  E.\,I.\,Zelmanov, ``Absolute zero divisors and algebraic Jordan algebras'', Sib.Mat. Zh., 23:6 (1982), 100--116 (Russian); English transl.: Siberian Math. J., 23:6 (1982), 841--854.

\end{thebibliography}
\end{document}